\newtheorem{theorem}{Theorem}[section]
\newtheorem{proposition}[theorem]{Proposition}
\newtheorem{lemma}[theorem]{Lemma}
\newtheorem{corollary}[theorem]{Corollary}
\newtheorem{problem}[theorem]{Problem}
\theoremstyle{definition}
\newtheorem{definition}[theorem]{Definition}
\newtheorem{example}[theorem]{Example}
\theoremstyle{remark}
\newtheorem{remark}[theorem]{Remark}
\numberwithin{equation}{section}
\def\N{\mathbb N}
\def\Z{\mathbb Z}
\def\Mon{\mathrm{Mon}}
\def\Kdim{\mathrm{dim}}
\def\lt{{\ell t}}
\def\lm{{\ell m}}
\def\LCM{ LCM }
\def\lc{{\ell c}}
\def\G{{\mathcal G}}
\def\SS{{\mathcal T}}
\def\Sn{S^{\langle n \rangle}}
\def\Sm{S^{\langle m \rangle}}
\title{\textbf{Gr\"{o}bner bases for the polynomial ring with infinite variables and their applications}}
\author{Kei-ichiro Iima and Yuji Yoshino}
\date{}
\begin{document}

\maketitle

\begin{abstract}
We develop the theory of Gr\"obner bases for ideals in a polynomial ring with countably infinite variables over a field. 
As an application we reconstruct some of the one-one correspondences among various sets of partitions by using division algorithm. 
\end{abstract}


\section{Introduction} 
The purpose of this paper is to develop the theory of Gr\"obner bases for ideals in a polynomial ring  $k[x_1, x_2, \ldots]$  with countably infinite variables over a field $k$. 
In such a case, ideals are not necessarily finitely generated, and hence the Gr\"obner bases for ideals might be consisting of infinite polynomials. 
However we shall claim that there is still an algorithm to get the Gr\"obner base for a given ideal. 

This idea of Gr\"obner bases for infinitely generated ideals is strongly motivated by the following observation. 
Recall that a sequence  $\lambda = (\lambda _1, \lambda _2 , \ldots , \lambda _r)$ of positive integers is called  a partition of a non-negative integer $n$  if  the equality $\lambda _1 + \lambda _2 + \cdots + \lambda _r = n$ holds and $\lambda _1 \geq \lambda _2 \geq \cdots \geq \lambda _r \geq 1$.
In such a case we denote it by $\lambda \vdash n$.
We are concerned with the following sets of partitions:
$$
\begin{array}{rll} 
A(n) &= \{ \ \ \lambda \vdash n &| \ \ \lambda _i \equiv \pm 1\pmod 6 \ \}, \vspace{6pt}  \\
B(n) &= \{ \ \ \lambda \vdash n &| \ \ \lambda _i \equiv \pm 1\pmod 3 , \quad \lambda _1 > \lambda _2 > \cdots > \lambda _r \ \}, \vspace{6pt}\\
C(n) &= \{ \ \ \lambda \vdash n &| \ \ \text{each $\lambda _i$ is odd, and} \\ 
 & &\ \ \ \text{any number appears in $\lambda _i$'s at most two times \}}. \vspace{6pt}
\end{array}
$$
It is known by the famous Schur's equalities (see \cite{A}) that all these sets  $A(n)$, $B(n)$ and $C(n)$ have the same cardinality for all $n \in \N$. 
It is also known that the one-to-one correspondences among these three sets are realized in some combinatorial way using 2-adic or 3-adic expansions of integers. 
However such one-to-one correspondences can be reconstructed through the division algorithm by using the theory of Gr\"{o}bner bases. 
For this, we need to extend the theory of Gr\"{o}bner bases to a polynomial ring with infinitely many variables.

In Section 1,  we shall give necessary definitions of initial ideals, Gr\"obner bases,  S-polynomials and regular sequences  in  the polynomial ring $k[x_1, x_2, \ldots]$. 
And we develop the theory of Gr\"obner bases for ideals in such a polynomial ring by presenting a sequence of propositions, 
most of which goes in parallel with the ordinary case for ideals in polynomial rings with finitely many variables. 
But the difference is that ideals are not necessarily finitely generated and we need to argue about infinite set of polynomials as Gr\"obner bases and regular sequences. 
One of the essentially new results in this paper is Theorem \ref{construction reduced base} where we give an algorithm  to get the reduced Gr\"obner bases. 
The other one is Theorem \ref{5} in which we show that any permutation of a homogeneous regular sequence of infinite length is again a regular sequence.

In Section 2,  we apply the theory developed in Section 1 to the sets of partitions. 
The main result is Theorem \ref{main theorem}, where we give  one-to-one correspondences between various sets of partitions by using division algorithm in the theory of Gr\"obner bases. 
As one of the applications we shall give the bijective mapping among the above mentioned sets $A(n)$, $B(n)$  and  $C(n)$.

\subsection{Gr\"{o}bner bases for ideals}

Throughout this paper, let $k$  be any field and let $S = k[x_1,x_2,\ldots]$ be a polynomial ring with countably infinite variables.
We denote by  $\Z _{\ge 0}^{(\infty)}$ 
 the set of all sequences $a = (a_1,a_2,\ldots)$ of integers where $a_i=0$  for all $i$ but finite number of integers. 
Also we denote by $\Mon (S)$  the set of all monomials in  $S$. 
Since any monomial is described uniquely as $x^a = \prod_{i}x_{i}^{a_{i}}$ for some $a =  (a_1,a_2,\ldots) \in \Z_{\ge 0}^{(\infty)}$, 
we can identify these sets, i.e. $\Mon (S) \ \cong \ \Z _{\ge 0}^{(\infty)}$. 
If we attach degree on  $S$  by $\deg x_i =d_i$, then a monomial $x^a$  has degree  $\deg x ^a = \sum _{i=1}^{\infty} a_i d_i$. 
In the rest of the paper,  we assume that the degrees $d_i$'s are chosen in such a way that there are only a finite number of monomials of degree $d$ for each  $d \in \Bbb N$.
For example, the simplest way of attaching degree is that $\deg x_i = i$  for all $i \in \N$.

\begin{definition}
A total order $>$  on  $\Mon (S)$ is called a monomial order
 if $(\Mon (S), \ >)$ is a well-ordered set, and it is compatible with the multiplication of monomials, i.e. 
$x^a > x^b$ implies $x^cx^a > x^cx^b$ 
for all $x^a, x^b, x^c \in \Mon (S)$.
(See \cite[Chapter 15]{E}.)
\end{definition}

Note that the order $x_1 > x_2 > x_3 > \cdots$  is not acceptable for monomial order,  since it violates the well-ordering condition. 
On the other hand, if we are given any monomial order  $>$,  then, renumbering the variables, we may assume that  $x_1 < x_2 < x_3 < \cdots$.

The following are examples of monomial orders on $\Mon (S)$. 
(See \cite[Chapter 15, pp. 329--330]{E}.)

\begin{example}\label{order example} 
Let $a = (a_1, a_2, \ldots)$ and $b = (b_1, b_2, \ldots)$ be elements in $\Z_{\ge 0}^{(\infty)}$. 

\begin{itemize}
\item[(1)]
The pure lexicographic order $>_{pl}$  is defined in such a way that $x^a >_{pl} x^b$  if and only if $a_i > b_i$ for the last index $i$ with $a_i \neq b_i$.

\item[(2)]
The homogeneous  (resp. anti-) lexicographic order  $>_{hl}$  (resp. $>_{hal}$) is defined in such a way that  $x^a >_{hl} x^b$   (resp. $x^a >_{hal}  x^b$)   if and only if either  $\deg x^a > \deg x^b$ or $\deg x^a = \deg x^b$ and $a_i > b_i$ for the last (resp. first) index $i$ with $a_i \neq b_i$.

\item[(3)]
The homogeneous (resp. anti-) reverse lexicographic order 
$>_{hrl}$ (resp. $>_{harl}$) is defined as follows:
$x^a >_{hrl} x^b$  (resp.  $x^a >_{harl} x^b$)  if and only if either $\deg x^a > \deg x^b$ or $\deg x^a = \deg x^b$ and $a_i < b_i$ for the first (resp. last) index $i$ with $a_i \neq b_i$. 
\end{itemize}
\end{example}

As in the orders in (2) to (3), if it satisfies that 
 $\deg x^a > \deg x^b$  implies  $x^a > x^b$, then we say that the order  $>$  is homogeneous. 
The monomial orders in Example \ref{order example} are all distinct as shown in the following example in which  $\deg x_i = i$  for  $i \in \N$:  

$$
\begin{array}{clclclclcl}
x_4  &>_{hl}  &x_1x_3  &>_{hl}  &x_2^2 &>_{hl}  &x_1^2x_2  &>_{hl} &x_1^4, \vspace{4pt} \\
\vspace{4pt}
x_1^4 &>_{hal} &x_1^2x_2 &>_{hal} &x_1x_3 &>_{hal} &x_2^2 &>_{hal} &x_4 , \\
\vspace{4pt}
x_4   &>_{hrl}  &x_2^2 &>_{hrl} &x_1x_3 &>_{hrl} &x_1^2x_2 &>_{hrl} &x_1^4 , \\
\vspace{4pt}
x_1^4 &>_{harl} &x_1^2x_2 &>_{harl} &x_2^2 &>_{harl} &x_1x_3 &>_{harl} &x_4 .
\end{array}
$$

Now suppose that a monomial order $>$  on  $\Mon (S)$ is given and we fix it in the rest of this section.  
Then,  any non-zero polynomial  $f \in S$  is expressed as  
$$
f = c_1 x^{a{(1)}} + c_{2} x^{a{(2)}}+ \cdots + c_{r} x^{a{(r)}}, 
$$
 where  $c_{i} \neq 0 \in k$  and  $x^{a{(1)}} > x^{a{(2)}} > \ldots > x^{a{(r)}}$. 
In such a case,  
the leading term, the leading monomial and the leading coefficient of $f$  are given respectively as 
$\lt  (f) = c_{1}x^{a{(1)}}$,  $\lm (f) = x^{a{(1)}}$ and 
$\lc (f) = c_{1}$. 
For a non-zero ideal $I \subset S$, the initial ideal  $in(I)$  of  $I$  is defined to be the ideal generated by all the leading terms $\lt  (f)$  of non-zero polynomials  $f \in I$. 
(See \cite[Chapter 15, p. 329]{E}.)

For a positive integer $n$,  we set  $\Sn = k[x_1, x_2, \ldots , x_n]$ which is a polynomial subring of $S$. 
Note that there is a filtration  $S^{\langle 1 \rangle} \subset \cdots \subset S^{\langle n \rangle} \subset S^{\langle n+1 \rangle} \subset \cdots \subset S$  and $S = \cup _{n=1} ^{\infty} \Sn$. 
Since $\Mon (\Sn)  \subset  \Mon (S)$, we always employ the restricted monomial order from  $\Mon (S)$  as a monomial order on $\Mon (\Sn)$. 
Therefore if  $f \in S$ then the leading monomial of  $f$  in  $\Sn$  is independent of  any  such $n$  with  $f \in \Sn$.

One easily observes the following remark. 
(See \cite[Chapter 15, Proposition 15.4]{E}.)

\begin{remark}\label{characterize order}
\begin{itemize}
\item[$(1)$]
Assume that the monomial order is a pure lexicographic order. 
If $\lm (f) \in \Sn$ for $f \in S$  and  $n \in \N$,  then $f \in \Sn$. 

\item[$(2)$]
Assume that the monomial order  is  a homogeneous lexicographic order. 
If  $\lm (f) \in \Sn$  for a homogeneous polynomial  $f \in S$  and  $n \in \Bbb N$, then $f \in \Sn$. 

\item[$(3)$]
Assume that the monomial order is a homogeneous reverse lexicographic order. 
If  $\lm (f) \in (x_1, x_2, \ldots , x_n)S$ for a homogeneous polynomial $f \in S$ and $n \in \Bbb N$, then  $f \in (x_1, x_2, \ldots , x_n)S$.
\end{itemize}
\end{remark}

\bigskip

The Gr\"obner base for an ideal of  $S$  is defined similarly to the ordinary case. 
(See \cite[Chapter 15]{E}.)

\begin{definition}
A subset $\G$  of an ideal $I$ of  $S$  is called a  Gr\"obner base for $I$ if $\{ \ \lm (g) \ | \ g \in \G \}$  generates the initial ideal  $in (I)$. 
\end{definition}

It is easily observed that a Gr\"obner base for $I$ is actually a generating set of $I$. 
Note that an ideal $I$ does not necessarily admit a finite Gr\"obner base, since  $S$  is not a Noetherian ring. 
However the generating set of  $in (I)$  is a subset of $\Mon (S)$ which must be a  countable set, hence 
one can always take a countable set of polynomials as a Gr\"obner base for $I$.

Any argument concerning Gr\"obner bases for an ideal of  $S$  can be reduced to the ordinary case for the polynomial rings with finite variables by the following lemma, in which, for a subset $\G$  of  $S$, we denote by $in (\G)$  the set of all the leading monomials  $\lm (g)$  for  $g \in \G$.

\begin{lemma}\label{NSC for Groebner}
The following conditions are equivalent for a subset $\G$  of  an ideal $I$  of  $S$.  

\begin{itemize}
\item[$(1)$]
$\G$  is a Gr\"obner base for  $I$. 

\item[$(2)$]
$in (\G) \cap \Sn$   generates the initial ideal  $in (I \cap \Sn)$ 
for all integers $n$. 

\item[$(3)$]
$in (\G) \cap \Sn$   generates the initial ideal  $in (I \cap \Sn)$ 
for infinitely many integers $n$. 
\end{itemize}
\end{lemma}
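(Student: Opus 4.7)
The plan is to prove the cyclic chain $(1) \Rightarrow (2) \Rightarrow (3) \Rightarrow (1)$, of which $(2) \Rightarrow (3)$ is immediate. Both nontrivial implications rest on a single combinatorial fact about the subring $\Sn \subset S$: a monomial $x^a$ belongs to $\Sn$ if and only if $a_i = 0$ for every $i > n$, a condition that is preserved under passage to divisors. In particular, if $\lm(g)$ divides $\lm(f)$ and $\lm(f) \in \Sn$, then $\lm(g) \in \Sn$ as well.

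For $(1) \Rightarrow (2)$, I fix $n$ and take any nonzero $f \in I \cap \Sn$. Since $\G$ is a Gr\"obner base for $I$, the monomial $\lm(f)$ lies in $in(I)$, so it must be divisible by $\lm(g)$ for some $g \in \G$. The divisor observation forces $\lm(g) \in \Sn$, and hence $\lm(g) \in in(\G) \cap \Sn$. As $f$ ranges over the nonzero elements of $I \cap \Sn$ this produces every monomial generator of $in(I \cap \Sn)$ as a multiple of some element of $in(\G) \cap \Sn$, so $in(\G) \cap \Sn$ generates $in(I \cap \Sn)$ in $\Sn$.

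For $(3) \Rightarrow (1)$, I start from an arbitrary nonzero $f \in I$. Using $S = \bigcup_n \Sn$ I find some $N$ with $f \in S^{\langle N \rangle}$, and from $(3)$ I may pick $n \geq N$ in the infinite index set for which the hypothesis holds. Then $f \in I \cap \Sn$, so $\lm(f) \in in(I \cap \Sn)$ is a multiple of $\lm(g)$ for some $g \in \G$ with $\lm(g) \in \Sn \subseteq S$. Since this works for every $f \in I$, we have $in(I) \subseteq (in(\G))$ in $S$; the reverse inclusion is automatic from $\G \subseteq I$. Hence $\G$ is a Gr\"obner base for $I$.

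The only delicate point in the whole argument is the divisor-closed property of $\Sn$-monomials that underpins both directions and, in particular, allows one to pass from the global Gr\"obner-base witnesses down to each finite-variable subring. Once that observation is in hand, both nontrivial implications are short unpackings of the definition of a Gr\"obner base together with the exhaustion $S = \bigcup_n \Sn$; I do not foresee any genuine obstacle beyond this bookkeeping.
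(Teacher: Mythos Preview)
Your proposal is correct and follows essentially the same route as the paper: the cyclic chain $(1)\Rightarrow(2)\Rightarrow(3)\Rightarrow(1)$, the divisor-closure observation that $\lm(g)\mid\lm(f)\in\Sn$ forces $\lm(g)\in\Sn$, and the exhaustion $S=\bigcup_n\Sn$ to pass to a large enough index in $(3)\Rightarrow(1)$. The paper's proof is slightly more terse but the ideas are identical.
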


\begin{proof}
$(1) \Rightarrow (2)$:  
Suppose $\G$  is a Gr\"obner base for $I$ and let  $f \in I \cap \Sn$. 
Then there is $g \in \G$ such that  $\lm (g)$  divides  $\lm (f)$. 
Since  $\lm (f) \in \Sn$, we have  $\lm (g) \in  in (\G) \cap \Sn$. 
Thus $in (\G) \cap \Sn$   generates $in (I \cap \Sn)$.  

$(2) \Rightarrow (3)$:  
Trivial.  

$(3) \Rightarrow (1)$:  
Let  $f \in I$  be any element. 
Take an integer $n$ so that  $f \in \Sn$. 
Then, by the condition  (3),  there is an integer $m \geq n$  such that 
 $in (I \cap \Sm)$  is generated by $in (\G) \cap \Sm$. 
Since  $f \in I \cap \Sm$, the leading monomial  $\lm (f)$ is a multiple of $\lm (g)$  for some  $g \in \G$.
Therefore  $\G$  is a Gr\"obner base for  $I$. 
\end{proof}

\begin{corollary}\label{cor 1}
Let  $\G$  be a subset of an ideal $I$ of  $S$. 
Assume that  $\G \cap \Sn$  is a Gr\"obner base for an ideal $I \cap \Sn$
for infinitely many integers  $n$. 
Then  $\G$  is a Gr\"obner base for  $I$. 
\end{corollary}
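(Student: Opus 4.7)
The plan is to reduce the statement immediately to condition (3) of Lemma \ref{NSC for Groebner}. Concretely, I would show that under the hypothesis, $in(\G)\cap \Sn$ generates $in(I\cap \Sn)$ as an ideal of $\Sn$ for every $n$ in the given infinite set, and then invoke the implication $(3)\Rightarrow (1)$ of that lemma.

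The first step is to fix an integer $n$ for which $\G\cap \Sn$ is a Gr\"obner base for $I\cap \Sn$. By the very definition of Gr\"obner base applied inside $\Sn$, the set $\{\lm(g)\ |\ g\in \G\cap \Sn\}$ generates $in(I\cap \Sn)$ as an ideal of $\Sn$. The second step, which is the only substantive observation, is the inclusion
$$
\{\lm(g)\ |\ g\in \G\cap \Sn\}\ \subseteq\ in(\G)\cap \Sn.
$$
This is immediate: if $g\in \G\cap \Sn$, then $\lm(g)\in in(\G)$ by definition, and $\lm(g)$ lies in $\Sn$ simply because $g$ itself does (the restricted monomial order on $\Sn$ agrees with the one on $\Mon(S)$, as noted in the paragraph preceding Remark \ref{characterize order}). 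Combining the two steps, $in(\G)\cap \Sn$ contains a generating set of $in(I\cap \Sn)$, so it generates $in(I\cap \Sn)$.

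Since this holds for infinitely many $n$, condition (3) of Lemma \ref{NSC for Groebner} is satisfied, and the lemma delivers that $\G$ is a Gr\"obner base for $I$.

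There is no real obstacle here; the only point to be careful about is that the reverse inclusion $in(\G)\cap \Sn\subseteq\{\lm(g)\ |\ g\in \G\cap \Sn\}$ need \emph{not} hold in general (an element $g\in \G$ outside $\Sn$ may still have $\lm(g)\in \Sn$ under orders other than the pure lexicographic one, cf.\ Remark \ref{characterize order}). Fortunately we only need the one-sided containment, which suffices to compare the two generating sets and thereby pass from $\G\cap \Sn$ being a Gr\"obner base of $I\cap \Sn$ to $in(\G)\cap \Sn$ generating $in(I\cap \Sn)$.
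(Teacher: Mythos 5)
Your proof is correct and follows essentially the same route as the paper: establish the inclusion $in(\G\cap \Sn)\subseteq in(\G)\cap \Sn$, conclude that $in(\G)\cap \Sn$ generates $in(I\cap \Sn)$ for the given infinitely many $n$, and apply condition $(3)$ of Lemma \ref{NSC for Groebner}. Your closing caveat about the failure of the reverse inclusion matches the paper's own remark immediately after the corollary.
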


\begin{proof}
It follows from the definition that 
$in (\G \cap \Sn) \subseteq in (\G) \cap \Sn$. 
Since  $in (\G \cap \Sn)$ generates the initial ideal $in (I \cap \Sn)$ for such infinitely many integers  $n$  in the assumption,  $\G$  is a Gr\"obner base for $I$ by Lemma \ref{NSC for Groebner}.
\end{proof}

Note that  the inclusion  $in (\G \cap \Sn) \subseteq in (\G) \cap \Sn$ is strict in general. 

\begin{corollary}\label{cor 2}
Assume that the monomial order is a pure lexicographic order. 
If $\G$  is  a Gr\"obner base for an ideal  $I$  of  $S$, then 
 $\G \cap \Sn$ is a Gr\"obner base for $I \cap \Sn$  for all $n \in \N$. 
\end{corollary}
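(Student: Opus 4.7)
The plan is to reduce the statement to checking the defining property of Gröbner base directly, and then to invoke the special feature of pure lexicographic order recorded in Remark \ref{characterize order}(1). Fix $n$. I want to show that for every nonzero $f \in I \cap \Sn$, there is some $g \in \G \cap \Sn$ whose leading monomial divides $\lm(f)$; once this is established, $\{\lm(g) : g \in \G \cap \Sn\}$ generates $\operatorname{in}(I \cap \Sn)$ as an ideal of $\Sn$, which is the definition of $\G \cap \Sn$ being a Gröbner base for $I \cap \Sn$.

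So let $f \in I \cap \Sn$ be nonzero. Because $\G$ is a Gröbner base for $I$, there exists $g \in \G$ such that $\lm(g)$ divides $\lm(f)$. Since $\lm(f) \in \Mon(\Sn)$ and divisibility is inherited, $\lm(g) \in \Mon(\Sn)$ as well. Here is where the hypothesis on the order enters: under pure lexicographic order, Remark \ref{characterize order}(1) tells us that $\lm(g) \in \Sn$ forces $g \in \Sn$. Hence $g \in \G \cap \Sn$, and $\lm(g)$ divides $\lm(f)$ as required.

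The only mild subtlety to be careful about is whether ``Gröbner base for $I \cap \Sn$'' is meant in the sense of the ambient ring $S$ or of the subring $\Sn$; but these agree because the monomial order on $\Mon(\Sn)$ is the restriction of the one on $\Mon(S)$, and for a subset $\mathcal{H} \subset \Sn$ the ideal of $\Sn$ generated by $\mathcal{H}$ and the ideal of $S$ generated by $\mathcal{H}$ contain exactly the same monomials from $\Mon(\Sn)$. There is no real obstacle: the whole argument is a one-step application of Remark \ref{characterize order}(1), and the pure lex hypothesis is essential precisely because, under homogeneous orders, $\lm(g) \in \Sn$ need not imply $g \in \Sn$ for inhomogeneous $g$, so the analogous statement would fail.
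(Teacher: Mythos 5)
Your proof is correct and follows essentially the same route as the paper: the key step in both is Remark \ref{characterize order}(1), which under pure lex upgrades $\lm(g)\in\Sn$ to $g\in\Sn$, so that the element $g\in\G$ witnessing divisibility of $\lm(f)$ actually lies in $\G\cap\Sn$. The paper phrases this as the equality $in(\G\cap\Sn)=in(\G)\cap\Sn$ combined with Lemma \ref{NSC for Groebner}, whereas you unwind that lemma's $(1)\Rightarrow(2)$ direction inline, but the content is identical.
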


\begin{proof}
It follows from Remark \ref{characterize order}(1) that  
the equality $in (\G \cap \Sn) = in (\G) \cap \Sn$ holds in this case 
and it is a generating  set of  $in (I \cap \Sn)$ for each $n$. 
\end{proof}

Now we can construct a Gr\"obner base for any ideal of  $S$.

\begin{proposition}\label{construct groebner}
Let  $I$ be an ideal of  $S$ and let  $C$  be an arbitrary infinite subset of  $\N$. 
For each  $n \in C$, take a Gr\"obner base  $\G_n$  for an ideal $I \cap \Sn$  inside  $\Sn$. 
Then, the set  $\bigcup _{n \in C} \G _n$  is a Gr\"obner base for $I$.
\end{proposition}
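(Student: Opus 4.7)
The plan is to apply Corollary \ref{cor 1} directly to $\G := \bigcup_{n \in C} \G_n$. First I would verify that $\G$ is in fact a subset of $I$: each $\G_n$ is by hypothesis contained in $I \cap \Sn \subseteq I$, so the union lies in $I$. This legitimizes speaking of $\G$ as a candidate Gröbner base.

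Next, for each $n \in C$ I would argue that $\G \cap \Sn$ is a Gröbner base for $I \cap \Sn$. The key observation is the chain of inclusions $\G_n \subseteq \G \cap \Sn \subseteq I \cap \Sn$: the left inclusion holds because $\G_n \subseteq \Sn$ (as $\G_n$ is a Gröbner base inside $\Sn$) and $\G_n \subseteq \G$, while the right inclusion follows from the first paragraph. Since $\G_n$ is already a Gröbner base for $I \cap \Sn$, the set $\{\lm(g) \mid g \in \G_n\}$ generates $in(I \cap \Sn)$; a fortiori the larger set $\{\lm(g) \mid g \in \G \cap \Sn\}$ does. Combined with $\G \cap \Sn \subseteq I \cap \Sn$, this shows that $\G \cap \Sn$ is a Gröbner base for $I \cap \Sn$.

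Since $C$ is an infinite subset of $\N$, the preceding paragraph supplies infinitely many integers $n$ for which $\G \cap \Sn$ is a Gröbner base for $I \cap \Sn$, and Corollary \ref{cor 1} then delivers the conclusion that $\G$ is a Gröbner base for $I$. There is no genuine obstacle here; the only point to keep in mind is that $\G \cap \Sn$ may be strictly larger than $\G_n$ (because some $\G_m$ with $m > n$ could contribute polynomials that happen to lie in $\Sn$), but enlarging a Gröbner base by elements of $I \cap \Sn$ preserves the Gröbner base property, so this causes no harm.
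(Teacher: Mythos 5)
Your proposal is correct and follows exactly the paper's own argument: observe that $\G \cap \Sn$ contains $\G_n$ (and lies in $I \cap \Sn$), hence is itself a Gr\"obner base for $I \cap \Sn$ for each of the infinitely many $n \in C$, and then invoke Corollary \ref{cor 1}. You merely spell out the routine inclusions in more detail than the paper does.
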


\begin{proof}
Set  $\G = \bigcup _{n \in C} \G _n$, and we see 
that  $\G \cap \Sn$ contains  $\G _n$  for each  $n \in C$, 
hence  $\G \cap \Sn$, as well as  $\G _n$,  is a Gr\"obner base for  $I \cap \Sn$ for such  $n$. 
Hence  Corollary \ref{cor 1}  implies that  $\G$  is a Gr\"obner base for $I$.  \end{proof}

Compare the following division algorithm with that in \cite[Chapter 15, Proposition-Definition 15.6]{E}.

\begin{proposition}[Division  algorithm]\label{division} 
Let $\G$  be a subset of  $S$. 
Then any non-zero polynomial $f \in S$  has an expression
$$
f = f_1g_1 + f_2 g_2 + \cdots + f_s g_s + f', 
$$
with  $g_i \in \G$  and  $f_i, f' \in S$  such that the following conditions hold:
\begin{itemize}
\item[$(1)$]
If we write  $f' = \sum _{i=1}^t  c_i x^{a{(i)}}$  with  $c_i \not= 0 \in k$,  then  $x^{a{(i)}} \notin in (\G S)$ for each $i = 1, 2, \ldots , t$.  

\item[$(2)$]
If  $f_i g_{i} \neq 0$, then $\lm (f_i g_i) \leq \lm (f)$.
\end{itemize}

Any such $f'$ is called a remainder of  $f$  with respect to $\G$. 
Note that a remainder is in general not necessarily unique.  
But if $\G$  is a Gr\"obner base for $I= \G S$, then 
 a remainder of $f$ with respect to $\G$ is uniquely determined. 
\end{proposition}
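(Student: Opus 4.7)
The plan is to mimic the standard division algorithm in the finite-variable case, relying on the well-ordering of $(\Mon(S), >)$ built into the definition of a monomial order to guarantee termination.

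For existence, I would maintain a polynomial $h$ (initialised to $f$), coefficients $f_g$ indexed by $g \in \G$ (all initialised to $0$), and a partial remainder $f'$ (initialised to $0$), preserving the invariant
$$
f = \sum_{g \in \G} f_g\, g + f' + h.
$$
At each iteration inspect $\lt(h)$. If $\lm(h) = x^{\alpha}\lm(g)$ for some $g \in \G$, set $c = \lc(h)/\lc(g)$, add $c x^{\alpha}$ to $f_g$, and replace $h$ by $h - c x^{\alpha} g$; otherwise add $\lt(h)$ to $f'$ and replace $h$ by $h - \lt(h)$. In either case the leading monomial of $h$ is cancelled and all remaining monomials are strictly less than the old $\lm(h)$, so $\lm(h)$ strictly decreases in the well-order. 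Hence the process terminates in finitely many steps with $h = 0$, at which point only finitely many $f_g$ are non-zero. Property $(1)$ is built into the construction: no monomial divisible by any $\lm(g)$ is ever moved to $f'$. Property $(2)$ follows because every term $c x^{\alpha}$ added to $f_g$ arose at a step where $c x^{\alpha}\lm(g) = \lm(h) \le \lm(f)$, so $\lm(f_g \cdot g) \le \lm(f)$.

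For uniqueness under the Gr\"obner base hypothesis, suppose $f$ admits two such expressions with remainders $f'$ and $f''$. Their difference $f' - f'' = \sum (f_i' - f_i)\, g_i$ lies in $\G S = I$. If $f' - f'' \neq 0$, then $\lm(f' - f'') \in in(I)$, and since $\G$ is a Gr\"obner base this leading monomial is divisible by $\lm(g)$ for some $g \in \G$. But every monomial appearing in $f'$ or $f''$ lies outside $in(\G S)$ by $(1)$, hence the same holds for every monomial of $f' - f''$, a contradiction. Therefore $f' = f''$.

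The only point at which the argument could conceivably differ from the finite-variable case is the termination of the reduction loop, and it does not: the well-ordering axiom in the definition of a monomial order is exactly what is needed, replacing any Dickson-type or finiteness-of-variables argument. Thus the infinite-variable setting presents no genuine obstacle.
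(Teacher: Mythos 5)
Your proof is correct and follows essentially the same route as the paper: the authors phrase it as induction on $\lm(f)$ (reduce by some $g$ with $\lm(g)\mid\lm(f)$, or peel off $\lt(f)$ into the remainder), which is precisely your reduction loop with termination guaranteed by the well-ordering of $\Mon(S)$, and your uniqueness argument is the standard one the paper declares ``obvious from the definition of Gr\"obner bases.'' No gap.
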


\begin{proof}
The existence of such an expression is proved by induction on  $\lm (f)$, 
which goes in a similar way to the proof in \cite[Proposition 15.8]{E}. 
In fact, if  $\lm (f) \in in (\G S)$, then one can find  $g \in \G$  whose leading monomial divides $\lm (f)$, i.e.  $\lt (f) = c \mu \cdot \lt (g)$  for some monomial $\mu$ and $c \in k$. 
In this case, since  $\lm (f - c \mu g) < \lm (f)$, the proof is done by the induction hypothesis. 
If  $\lm (f) \not\in in (\G S)$, then arguing about  $f - \lt (f)$ we will have a desired expression again by the induction hypothesis. 

The last half of the proposition is obvious from the definition of Gr\"obner bases.
\end{proof}

Let us assume that $S$  is a graded ring with homogeneous monomial order and  $\G$  is a set of homogeneous polynomials. 
Then we remark that  if  $f \in S$  is a homogenous polynomial,  then  
all polynomials in the expression in Proposition \ref{division} can be taken to be homogeneous, hence the remainder of $f$  with respect to  $\G$  is also homogenous.

By virtue of Proposition \ref{division}, the membership problem has a solution 
as in the ordinary cases. 

\begin{corollary}\label{membership}
Let  $\G$  be a Gr\"obner base for an ideal  $I$. 
Then an element  $f \in S$  belongs to  $I$ if and only if $0$  is the remainder of $f$ with respect to $\G$.
\end{corollary}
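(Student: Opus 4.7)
The plan is to prove the two implications separately, both by direct application of the division algorithm (Proposition \ref{division}) combined with the defining property of a Gr\"obner base.

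For the easy direction, suppose $0$ is the remainder of $f$ with respect to $\G$. Then the expression in Proposition \ref{division} reads $f = \sum_{i} f_i g_i$ with each $g_i \in \G \subseteq I$, so $f \in I$ immediately.

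For the other direction, suppose $f \in I$ and apply Proposition \ref{division} to obtain $f = \sum_i f_i g_i + f'$ where no monomial appearing in $f'$ lies in $in(\G S)$. Since $g_i \in \G \subseteq I$, the polynomial $f' = f - \sum_i f_i g_i$ also lies in $I$. I would then argue by contradiction: if $f' \neq 0$, then $\lm(f') \in in(I)$, and because $\G$ is a Gr\"obner base, $in(I)$ is generated by $\{\lm(g) : g \in \G\}$, so $\lm(f')$ is divisible by some $\lm(g)$ with $g \in \G$; in particular $\lm(f') \in in(\G S)$, contradicting condition (1) of Proposition \ref{division}. Hence $f' = 0$, and by the uniqueness of the remainder asserted in the last part of Proposition \ref{division} (which applies because $\G$ is a Gr\"obner base for $\G S = I$, using that a Gr\"obner base generates its ideal), $0$ is \emph{the} remainder of $f$.

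There is no real obstacle here; the only point worth being careful about is the equality $in(I) = in(\G S)$, which follows from $\G S \subseteq I$ (giving one inclusion) and the Gr\"obner-base hypothesis $in(I) = (\lm(g) : g \in \G) \subseteq in(\G S)$ (giving the other). Everything else is bookkeeping around Proposition \ref{division}.
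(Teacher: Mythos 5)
Your proof is correct and follows exactly the standard route the paper intends (the paper states the corollary without a written proof, only remarking that it follows from Proposition \ref{division}): the remainder of an element of $I$ lies in $I$, and a nonzero remainder would have its leading monomial in $in(I)=in(\G S)$, contradicting condition (1) of the division algorithm. Your care about the identification $in(I)=in(\G S)$ and about uniqueness of the remainder is exactly the right bookkeeping.
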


Recall that a Gr\"obner base  $\G$ for a non-zero ideal $I$ of  $S$  is called  a reduced  Gr\"obner base if every $g \in \G$  is a monic polynomial, i.e.  $\lc (g) = 1$, and  $\lm (g)$ does not divide any term of $h$ for any $g \not= h \in \G$. 
Note that  any ideal of  $\Sn$ has a  unique reduced Gr\"obner base. 
(See \cite[Chapter 1]{S}.)

\begin{proposition}\label{existence reduced base}
For an arbitrary non-zero ideal  $I$  of  $S$, there uniquely exists a reduced Gr\"obner base for $I$. 
\end{proposition}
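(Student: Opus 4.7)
My approach follows the classical strategy in the finite-variable case: characterize the reduced Gr\"obner base as the ``lift'' of the unique minimal monomial generating set of $in(I)$, and deduce uniqueness by the standard rigidity argument on differences. The essential point that must survive the passage to infinitely many variables is that monomial ideals of $S$ still admit a unique minimal generating set. Let $M$ be the set of divisibility-minimal elements of $in(I)$; since every monomial of $S$ involves only finitely many variables and therefore has only finitely many divisors, any $m \in in(I)$ has a minimal divisor lying in $in(I)$, so $M$ generates $in(I)$. It is a divisibility antichain by construction, and any monomial generating set of $in(I)$ must contain $M$, so $M$ is the unique minimal monomial generating set.

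For each $m \in M$ I would construct a unique monic $g_m \in I$ with $\lm(g_m)=m$ and no non-leading term of $g_m$ in $in(I)$. For existence, start with any monic $h \in I$ having $\lm(h)=m$ and reduce iteratively: let $c\, x^b$ be the largest (in $>$) non-leading term of $h$ lying in $in(I)$, of which there are only finitely many candidates in a given polynomial; choose monic $h' \in I$ with $\lm(h') \mid x^b$, write $x^b = p \cdot \lm(h')$, and replace $h$ by $h - cp h'$. The leading term $m$ is preserved, and every term newly introduced is strictly below $x^b$ in $>$, so the largest offending non-leading term strictly decreases. Well-ordering of $>$ on $\Mon(S)$ forces the procedure to halt in finitely many steps, producing the required $g_m$. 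Uniqueness of $g_m$ follows because if $g_m, \tilde g_m$ both satisfied the conditions, their nonzero difference would sit in $I$ while its leading monomial (a non-leading term of one of the two) would lie outside $in(I)$, contradicting $\lm(g_m - \tilde g_m) \in in(I)$.

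Setting $\G = \{ g_m \mid m \in M \}$, the equality $in(\G) = M$ immediately makes $\G$ a Gr\"obner base, and the antichain property of $M$ together with the fact that non-leading terms of each $g_m$ avoid $in(I)$ makes $\G$ reduced. For uniqueness of the base itself, the leading monomials of any reduced Gr\"obner base $\G'$ generate $in(I)$ and form a divisibility antichain (otherwise reducedness would fail on some leading term), so they must coincide with $M$; reducedness then forces each element of $\G'$ to satisfy the characterizing property of the corresponding $g_m$, whose uniqueness we already established, so $\G' = \G$. The only delicate point is the termination of the iterative reduction in the existence argument, which rests essentially on the well-ordering of the monomial order; once granted, the rest parallels the finite-variable proof verbatim.
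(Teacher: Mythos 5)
Your proof is correct and follows essentially the same route as the paper: lift the unique minimal monomial generating set of $in(I)$ to elements of $I$, reduce each one so that its non-leading terms avoid $in(I)$, and derive uniqueness from the fact that an element of $I$ with no term in $in(I)$ must vanish. The only cosmetic difference is that you characterize each $g_m$ intrinsically and conclude $\G'=\G$ element by element, where the paper runs a minimal-counterexample argument on $\lm(g)$ for $g\in\G\setminus\G'$; both hinge on the same facts.
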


\begin{proof}
Let  $\{ \mu _{\lambda} | \ \lambda \in \Lambda \}$  be a minimal generating set of the monomial ideal $in (I)$, i.e. 
it generates $in (I)$ and any monomial dividing properly $\mu _{\lambda}$  does not belong to $in (I)$. 
It is easy to see that such a minimal generating set uniquely exists for  $in (I)$. 
Take $g_{\lambda} \in I$  such that  $\lm (g_{\lambda}) = \mu _{\lambda}$, and set  $\G = \{ g_{\lambda} |\ \lambda \in \Lambda\}$. 
Then it is clear that  $\G$  is a Gr\"obner base for $I$. 
Replacing  $g_{\lambda}$  with its remainder with respect $\G \backslash \{ g_{\lambda} \}$, we can see that  $\G$  is a reduced Gr\"obner base for $I$. 

To prove the uniqueness, let  $\G$  and $\G'$  be reduced Gr\"obner bases for $I$. 
Assume that $\G \not\subset \G'$. 
Then take  $g \in \G \backslash \G'$  so that $\lm (g)$  is minimum among those polynomials in  $\G \backslash \G'$. 
Since  $\G'$ is a Gr\"obner base for $I$, there is $g'\in \G'$ such that  $\lm (g')$  divides $\lm (g)$. 
Then it forces $\lm (g) = \lm (g')$, since  $g$  is an element of a reduced Gr\"obner base. 
Note that every term of the polynomial  $g -g'$  is not belonging to $in (\G S)$,  since it is smaller than  $\lm (g)$ and since any monomial $\mu \in \G$  with  $\mu < \lm (g)$ belongs to  $\G'$. 
As a result, we see that  $g-g'$  is a remainder of  $g-g'$ itself  with respect to  $\G$. 
As we remarked above as the membership problem, this forces  $g =g'$  and hence  $g \in \G '$. 
This contradiction shows that $\G \subseteq \G'$. 
And by the symmetry  of arguments  we conclude that $\G = \G'$. 
\end{proof}

\begin{theorem}\label{construction reduced base}
Let  $I$  be a non-zero ideal  $I$  of  $S$. 
Take a reduced Gr\"obner base $\G _n$ for $I \cap \Sn$ inside the polynomial ring  $\Sn$  for each $n$, and consider the following set of polynomials in  $S$;
$$
\overline{\G} = \bigcup _{m=1}^{\infty} \left( \bigcap _{n=m}^{\infty} \G _{n} \right).
$$
Then  $\overline{\G}$  is a reduced Gr\"obner base for  $I$. 
\end{theorem}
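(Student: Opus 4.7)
My plan is to identify $\overline{\G}$ with the unique reduced Gröbner base $\G^{\ast}$ of $I$ in $S$ supplied by Proposition \ref{existence reduced base}; since the latter is a reduced Gröbner base, so is the former. I will establish the two inclusions $\G^{\ast} \subseteq \overline{\G}$ and $\overline{\G} \subseteq \G^{\ast}$ by shuttling between the conditions defining the reduced Gröbner base of $I$ in $S$ and those defining the reduced Gröbner base $\G_n$ of $I \cap \Sn$ in $\Sn$. The bookkeeping rests on two elementary facts: the monotonicity $in(I \cap \Sn) \subseteq in(I)$, and the observation that every monomial $\mu = \lm(f) \in in(I)$ already lies in $in(I \cap \Sn)$ for any $n$ with $f \in \Sn$.

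For the inclusion $\G^{\ast} \subseteq \overline{\G}$, I would fix $g \in \G^{\ast}$ and choose the least $m$ with $g \in S^{\langle m \rangle}$. For each $n \geq m$ I need $g \in \G_n$, which amounts to checking that $g$ is monic (immediate), that $\lm(g)$ is a minimal generator of $in(I \cap \Sn)$, and that every non-leading monomial of $g$ avoids $in(I \cap \Sn)$. Minimality transfers from $in(I)$ to $in(I \cap \Sn)$ by the monotonicity above (any proper divisor in the smaller set is a proper divisor in the larger one), while the same containment transports the avoidance condition from $in(I)$ to $in(I \cap \Sn)$. Hence $g \in \bigcap_{n \geq m} \G_n \subseteq \overline{\G}$.

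For the inclusion $\overline{\G} \subseteq \G^{\ast}$, I would fix $g \in \overline{\G}$ with $g \in \G_n$ for all $n \geq m$, and verify the defining conditions for $\G^{\ast}$. A putative proper divisor of $\lm(g)$ in $in(I)$ has the form $\lm(h)$ for some $h \in I$; choosing $n \geq m$ with $h \in \Sn$ exhibits this proper divisor inside $in(I \cap \Sn)$ and contradicts the minimality of $\lm(g)$ in $\G_n$. Similarly, any non-leading monomial of $g$ lying in $in(I)$ would eventually lie in $in(I \cap \Sn)$ for some $n \geq m$, again contradicting reducedness of $\G_n$. To conclude $g \in \G^{\ast}$ I would invoke uniqueness: if $g'$ is the element of $\G^{\ast}$ with $\lm(g') = \lm(g)$, then $g - g'$ lies in $I$ with every monomial outside $in(I) = in(\G^{\ast} S)$, so it equals its own remainder with respect to $\G^{\ast}$ and must vanish by Corollary \ref{membership}.

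The main obstacle is conceptual rather than technical: one must keep firm control over the distinction between $in(I)$, $in(I \cap \Sn)$, and $in(I) \cap \Sn$, and verify that the minimality and avoidance conditions genuinely transfer in both directions along the filtration $S^{\langle 1 \rangle} \subseteq S^{\langle 2 \rangle} \subseteq \cdots$. Once the monotonicity observations above are in place, the uniqueness step reduces to the standard finite-variable uniqueness of the reduced Gröbner base, applied one leading monomial at a time.
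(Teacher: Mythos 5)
Your argument is correct and is essentially the paper's: both identify $\overline{\G}$ with the unique reduced Gr\"obner base $\G^{\ast}$ from Proposition \ref{existence reduced base}, and your first inclusion $\G^{\ast} \subseteq \overline{\G}$ --- transferring the minimality of $\lm(g)$ and the avoidance condition on the lower terms along $in(I \cap \Sn) \subseteq in(I)$ --- is exactly the paper's key step. The only divergence is at the end: instead of proving the reverse inclusion $\overline{\G} \subseteq \G^{\ast}$, the paper notes that $\overline{\G}$ is already a Gr\"obner base (it contains $\G^{\ast}$ and lies in $I$) and verifies reducedness directly by placing any two distinct $g,h \in \overline{\G}$ into a common $\G_m$; your longer route through the second inclusion and Corollary \ref{membership} is also valid.
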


\begin{proof}
Let  $\G$  be the unique reduced Gr\"obner base for $I$, whose existence  we have shown in Proposition \ref{existence reduced base}. 
First we prove that  $\G \subseteq \overline{\G}$. 
To show this, let  $g \in \G$ and take an integer  $n$  so that  $g \in \Sn$.
Note that  $\lm (g)$ is not divisible by any other monomial belonging to the minimal generating set of  the monomial ideal  $in (I)$, 
and also note that any terms of  $g$  other than  $\lt (g)$  are not divisible by any monomial in the minimal generating set of  $in (I)$. 
This implies that  $g$  is a member of the reduced Gr\"obner base for  $I \cap \Sn$, hence  $g \in \G_n$ for such $n$. 
Therefore  $g \in \overline{\G}$,  and we have shown  $\G \subseteq \overline{\G}$. 
Since  $\overline{\G}$  contains a Gr\"obner base for  $I$  and  since $\overline{\G} \subseteq I$,  $\overline{\G}$   is a Gr\"obner base for $I$ as well.

To show that  $\overline{\G}$  is a reduced Gr\"obner base for $I$,  let  $g, h$  be distinct elements of  $\overline{\G}$. 
Take an integer $m$  so that  $g, h \in \bigcap _{n \geq m} \G _n$, in particular  $g, h \in \G_m$. 
Since  $\G _m$  is a reduced Gr\"obner base, we see that $\lm (h)$  does not divide  any term of  $g$. 
Hence the Gr\"obner base  $\overline{\G}$  is a reduced one. 
\end{proof}

Recall that the $S$-polynomial of elements  $f, g \in S$ are defined to be 
$$
S(f,g) = \frac{ \LCM  \{ \lm (f) , \ \lm (g) \}}{\lt (f)} \ f - \frac{ \LCM  \{ \lm (f) , \ \lm (g) \}}{\lt (g)} \ g.
$$
(See \cite[Chapter 15.4]{E}.)
Note here that  $S$  is a unique factorization domain and that the least common multiple  $\LCM$  is defined well. 
Now the Buchberger's criterion for Gr\"obner bases is proved in a similar way to  ordinary cases for polynomial rings with finite variables \cite[Theorem 15.8]{E}.

\begin{proposition}[Buchberger's  criterion]\label{buchberger}
\indent Let $\G$ be a generating subset of an ideal $I \subset S$. 
Then $\G$ is a Gr\"obner base for  $I$ if and only if $0$  is a remainder of   $S(g_{\lambda},g_{\mu})$  with respect to $\G$  for all pairs  $(g_{\lambda},   g_{\mu})$  of elements in  $\G$. 
\end{proposition}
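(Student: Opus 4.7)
The plan is to mimic the classical proof of Buchberger's criterion. The forward direction is straightforward: for any $g_\lambda, g_\mu \in \G$, the $S$-polynomial $S(g_\lambda,g_\mu)$ lies in $I$ because $\G \subseteq I$, and then Corollary \ref{membership} forces the remainder of $S(g_\lambda,g_\mu)$ with respect to the Gr\"obner base $\G$ to be $0$.

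For the converse, I argue directly that the leading monomial of every non-zero $f \in I$ is divisible by the leading monomial of some $g \in \G$. Because $\G$ generates $I$, the element $f$ admits a \emph{finite} expression $f = \sum_{i=1}^r f_i g_i$ with $g_i \in \G$ and $f_i \in S$. Among all such finite expressions, choose one minimizing $\delta := \max\{\lm(f_i g_i) : f_i g_i \neq 0\}$ in the monomial well-order; the existence of a minimum is guaranteed since $>$ is a well-order. The key claim is that $\delta = \lm(f)$; once this holds, some summand satisfies $\lm(f) = \lm(f_j)\cdot\lm(g_j)$, so $\lm(g_j)$ divides $\lm(f)$, as required.

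Suppose for contradiction $\delta > \lm(f)$. Then leading cancellation must occur at $\delta$, i.e.\ $\sum_{i \in J} \lt(f_i g_i) = 0$ where $J = \{i : \lm(f_i g_i) = \delta\}$. By the classical rewriting trick used in \cite[Theorem 15.8]{E}, this cancellation can be expressed as a $k$-linear combination of terms of the form $\mu_{ij}\, S(g_i,g_j)$ for $i,j \in J$, where $\mu_{ij}$ is the monomial with $\mu_{ij}\cdot \LCM \{\lm(g_i),\lm(g_j)\} = \delta$. By hypothesis, Proposition \ref{division} gives, for each such pair, an expression $S(g_i,g_j) = \sum_k h_k^{(ij)} g_k^{(ij)}$ with $g_k^{(ij)} \in \G$ and $\lm(h_k^{(ij)} g_k^{(ij)}) \leq \lm(S(g_i,g_j)) < \LCM \{\lm(g_i),\lm(g_j)\}$. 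Multiplying by $\mu_{ij}$ and substituting back into $f = \sum_i f_i g_i$, every contribution at the monomial $\delta$ is killed, producing a new finite expression $f = \sum_l \tilde f_l \tilde g_l$ over $\G$ whose maximum leading monomial is strictly smaller than $\delta$, contradicting minimality.

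The main obstacle I anticipate is the bookkeeping in the $S$-polynomial rewriting step: precisely verifying that the leading-term cancellation at $\delta$ can be repackaged as a combination of S-polynomials and that the substitution strictly decreases the maximum leading monomial. Once this is in hand, the non-Noetherianity of $S$ poses no difficulty — every expression we manipulate is a finite $S$-linear combination of finitely many elements of $\G$, so the argument proceeds formally as in the classical finite-variable proof without any need to pass to the filtration $\{\Sn\}$.
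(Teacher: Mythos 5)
Your proposal is correct and follows essentially the same route as the paper's proof: both reduce the converse to showing $\lm(f)$ is divisible by some $\lm(g)$, write $f$ as a finite combination over $\G$, and use the standard cancellation-into-$S$-polynomials rewriting to lower the maximal leading monomial, with termination (in your version, existence of a minimal $\delta$) guaranteed by the well-ordering of the monomial order. The only cosmetic difference is that you phrase the descent as a minimal-counterexample argument while the paper phrases it as an explicit reduction step via its Claims 1 and 2, whose verification it likewise leaves to the reader.
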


\begin{proof}
The \lq\lq only if\rq\rq part is obvious. 
To prove the \lq\lq if\rq\rq    part,  let  $f \in I$  and we show that $\lm (f)$  is a multiple of  $\lm (g)$  for an element $g \in \G$. 
Since  $\G$  generates the ideal  $I$,  an equality $f = \sum _{i=1} ^r h_i g_i$  holds for some $g_i \in \G$  and  $h_i \in S \ (1\leq i \leq r)$. 
Let  $\mu$  be the monomial which is maximum  among  $\lm (h_ig_i) \ (1\leq i \leq r)$. 
If  $\lm (f) = \mu$, then there is nothing to prove, because  $\lm (f) = \lm (h_ig_i) = \lm (h_i) \lm (g_i)$ for some $i$. 
If  $\lm (f) < \mu$, then applying the following claims to  
$f_i = \lm (h_i)g_i$ and $\mu _i = \lm (h_i)$   for those indices  $i$  with  $\lm (h_i g_i) = \mu$,  
we shall have an alternative equality  $f = \sum _{i=1} ^{r'} h'_i g'_i$  such that  the maximum monomial  $\mu '$  is smaller than  $\mu$, and the proof will be through.

\vspace{6pt}\noindent
{\it Claim 1 :  Assume that  $f_1,f_2, \ldots , f_s \in S$  are polynomials in  $S$  having the same leading monomial  $\mu$. 
If  $\lm  (\sum _{j=1}^{s} c_j f_j) < \mu$  holds for  some  $c_j \in k$, 
 then $\sum _{j=1}^{s} c_j f_j$  is described as a linear combination of the $S$-polynomials  $S(f_j,f_{\ell})  \  \ (1 \leq j < \ell \leq s)$.}
\vspace{6pt}

\vspace{6pt}\noindent
{\it Claim 2 :
Let  $f_{i_1} , f_{i_2}$   be non-zero elements of  $S$  and let  $\mu_{i_1}, \mu_{i_2}$  be monomials. 
If  $\lm (\mu _{i_1} f_{i_1}) = \lm (\mu_{i_2} f_{i_2})$, then 
 $S(\mu _{i_1} f_{i_1}, \mu _{i_2} f_{i_2}) = \mu _{i_1} f_{i_1} - \mu _{i_2} f_{i_2}$  is a multiple of  $S(f_{i_1}, f_{i_2})$.
}

\vspace{6pt}
\noindent
Proofs of the claims are similar to the ordinary cases and we leave them to the reader.
\end{proof}

In the following proposition, we assume that  $S$  is a graded ring and that the monomial order is homogeneous.

\begin{proposition}\label{Macaulay}
Let $I$ be a homogeneous ideal of $S$, and let $\G$  be a Gr\"obner base for $I$ consisting of homogeneous polynomials. 
Define  $\varphi : S \to S$  by mapping $f \in S$  to the remainder of  $f$  with respect to  $\G$. 
Then $\varphi$  induces a mapping $\overline{\varphi} : S/I \to S/in (I)$ which is an isomorphism as graded $k$-vector spaces. 
\end{proposition}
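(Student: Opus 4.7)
The plan is to build $\overline{\varphi}$ as the map induced from the composition $\pi \circ \varphi \colon S \to S/in(I)$, where $\pi$ is the canonical projection, and then verify the four properties: $k$-linearity of $\varphi$, degree-preservation, factoring through $S/I$, and bijectivity onto $S/in(I)$.

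First I would establish that $\varphi$ is a well-defined $k$-linear, degree-preserving map. Well-definedness is immediate: since $\G$ is a Gr\"obner base for $I = \G S$, the remainder in Proposition \ref{division} is unique. For $k$-linearity, given $f_1, f_2 \in S$ and $c_1, c_2 \in k$, let $r_i = \varphi(f_i)$. Writing $f_i = (\text{element of }I) + r_i$, we have $c_1 f_1 + c_2 f_2 - (c_1 r_1 + c_2 r_2) \in I$, and each term of $c_1 r_1 + c_2 r_2$ is a term of some $r_i$, hence not in $in(\G S) = in(I)$; so $c_1 r_1 + c_2 r_2$ satisfies the characterization of the remainder of $c_1 f_1 + c_2 f_2$, and uniqueness forces $\varphi(c_1 f_1 + c_2 f_2) = c_1 \varphi(f_1) + c_2 \varphi(f_2)$. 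For grading, the remark immediately after the proof of Proposition \ref{division} says that when $f$ is homogeneous and $\G$ consists of homogeneous polynomials, the remainder can be chosen homogeneous of the same degree; by uniqueness this is $\varphi(f)$, so $\varphi$ preserves the grading.

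Next I would factor $\pi \circ \varphi$ through $S/I$. Any $f \in I$ has remainder $0$ with respect to $\G$ by Corollary \ref{membership}, so $\varphi(I) = 0$ and a fortiori $(\pi \circ \varphi)(I) = 0$. Hence there is a well-defined graded $k$-linear map $\overline{\varphi} \colon S/I \to S/in(I)$.

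Finally I would check that $\overline{\varphi}$ is bijective by using the standard fact that the monomials outside $in(I)$ form a $k$-basis of $S/in(I)$. For surjectivity, any monomial $\mu \notin in(I)$ is already its own remainder (the trivial expression $\mu = \mu$ satisfies the conditions of Proposition \ref{division}), so $\varphi(\mu) = \mu$ and $\pi(\mu) = \overline{\varphi}(\mu + I)$ is in the image; since these classes span $S/in(I)$, $\overline{\varphi}$ is surjective. For injectivity, suppose $\overline{\varphi}(f + I) = 0$, i.e.\ $\varphi(f) \in in(I)$. Every term of $\varphi(f)$ is a monomial outside $in(I)$ by condition (1) of Proposition \ref{division}, and the only element of $in(I)$ with this property is $0$; thus $\varphi(f) = 0$, and then $f \in I$ by Corollary \ref{membership}.

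The only step requiring care is the $k$-linearity, since this is where uniqueness of the remainder (which relies crucially on $\G$ being a Gr\"obner base rather than a mere generating set) is used; the remaining steps are essentially formal once one invokes the monomial basis of $S/in(I)$.
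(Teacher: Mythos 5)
Your proof is correct and follows essentially the same route as the paper's: uniqueness of the remainder for a Gr\"obner base, injectivity because a remainder lying in $in(I)$ must have all its terms outside $in(I)$ and hence be zero (whence $f \in I$ by Corollary \ref{membership}), and surjectivity because every monomial outside $in(I)$ is its own remainder. You merely spell out the $k$-linearity, grading, and factorization through $S/I$, which the paper's much terser proof leaves implicit.
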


\begin{proof}
Since  $\G$  is a Gr\"obner base, $\varphi (f)$  is uniquely determined for $f \in S$ by  Proposition \ref{division}. 
If  $\varphi (f) \in in (I)$, then $\varphi (f) =0$  and it follows from Corollary \ref{membership}  that $f \in I$. 
Hence  $\overline{\varphi}$  is an injection.
For any monomial $\mu \not\in in (I)$, it is clear that  $\mu$ itself is a remainder of  $\mu$  with respect to  $\G$. 
Therefore  $\overline{\varphi}$  is surjective. 
\end{proof}

Using the mapping  $\overline{\varphi}$, one can construct a one-to-one correspondence between the $k$-bases of $S/I$  and  $S/in (I)$. 
This idea is a key for the argument in the next section. 

Let  $R$  be a residue ring of the graded polynomial ring   $S$   by a homogeneous ideal  $I$, i.e.  $R = S/I$.
Then  $R$  is also a graded ring  and we denote by  $R_n$  the part of degree $n$  of  $R$. 
Recall that the Hilbert series of  $R$  is defined to be 
$$
H_{R} (T) = \sum _{i=0}^{\infty} \ ( \Kdim _k R_n ) \ T^n  
$$
which is an element of  $\Z [[ T]]$. 
Note that 
$$
H_S (T) = \prod _{i=1}^{\infty} \frac{1}{1-T^{d_i}} 
$$
where  $d _i = \deg (x_i)$. 
Remark that by the definition of graded structure of  $S$, each  $S_n$, hence $R_n$,   is of finite dimension over  $k$. 
In particular, there are only a finite number of variables  $x_i$  with  $\deg (x_i) \leq n$  for each integer  $n$. 
Hence  $H_R (T)$ and  $H_S(T)$  are well-defined elements of  $\Z [[T]]$.

Proposition \ref{Macaulay}  implies the following equality for Hilbert series.

\begin{corollary}
Under the same assumption as the proposition, the equality 
$
H_{S/I} (T) = H_{S/in (I)} (T) 
$
holds.   
\end{corollary}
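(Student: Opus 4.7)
The plan is to deduce this directly from Proposition \ref{Macaulay} by observing that the map $\overline{\varphi} : S/I \to S/in(I)$ not only is a $k$-linear isomorphism but actually respects the grading strictly, i.e.\ sends the degree-$n$ component to the degree-$n$ component. Once that is established, equality of the Hilbert series is immediate from the definition, since both generating functions are determined by the sequences $\dim_k (S/I)_n$ and $\dim_k (S/in(I))_n$.

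First I would recall that the monomial order is assumed homogeneous and that $I$ is homogeneous, so the minimal generating set of $in(I)$ consists of monomials and $in(I)$ is a homogeneous monomial ideal; in particular, $S/in(I)$ inherits a grading from $S$. Thus the target of $\overline{\varphi}$ is a graded $k$-vector space in a natural way.

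Next I would invoke the remark made just after Proposition \ref{division}: when $f \in S$ is homogeneous and $\G$ consists of homogeneous polynomials, the division algorithm can be carried out keeping all intermediate polynomials homogeneous of degree $\deg f$, so the remainder $\varphi(f)$ is either zero or homogeneous of degree $\deg f$. Taking a Gr\"obner base $\G$ of homogeneous polynomials (which exists because $I$ is a homogeneous ideal; cf.\ Proposition \ref{existence reduced base}), this shows $\varphi$ takes $S_n$ into $S_n$, and hence $\overline{\varphi}$ restricts to a $k$-linear map $(S/I)_n \to (S/in(I))_n$ for every $n$.

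Finally, since Proposition \ref{Macaulay} says $\overline{\varphi}$ is a bijection, its restriction to each graded piece is also a bijection: injectivity is inherited, and surjectivity follows because any monomial $\mu \notin in(I)$ of degree $n$ is its own remainder and already lies in degree $n$. Therefore $\dim_k (S/I)_n = \dim_k (S/in(I))_n$ for all $n \in \N$, which by the very definition of the Hilbert series yields $H_{S/I}(T) = H_{S/in(I)}(T)$. The only point requiring care is the degree-preservation of the division algorithm for homogeneous input; everything else is bookkeeping from Proposition \ref{Macaulay}.
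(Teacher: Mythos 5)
Your proof is correct and follows essentially the same route as the paper: the corollary is an immediate consequence of Proposition \ref{Macaulay}, which already asserts that $\overline{\varphi}$ is an isomorphism \emph{of graded} $k$-vector spaces, so the graded pieces have equal dimensions and the Hilbert series coincide. Your extra care in verifying degree-preservation via the homogeneous division remark is exactly the content already built into that proposition, so nothing further is needed.
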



Now we are discussing about the regularity condition for a sequence of elements in the ring.

\begin{definition}
Let $R$  be an arbitrary ring in this definition.  
Let  $\Omega$  be a well-ordered set and suppose we are given a set of elements   $\SS = \{ f_{\alpha} \in R  \ | \ \alpha \in \Omega \}$  indexed by $\Omega$.

\begin{itemize}
\item[(1)]
We call  $\SS$  a regular sequence on  $R$  if  
 $R/(f_{\alpha} \ | \ \alpha \in \Omega )R \not= 0$  and   
$f_{\alpha}$  is  a non-zero divisor on  $R/(f_{\beta} \ | \ \beta < \alpha)R$  for all $\alpha \in \Omega$. 
If  $R$  is a graded ring, then a regular sequence consisting of homogeneous elements in  $R$  is called a homogeneous regular sequence.

\item[(2)]
We say that the sequence $\SS$ satisfies the FR-condition (the finite regularity condition), if any finite subsequences $\{ f_{\alpha_1},f_{\alpha_2}, \ldots ,f_{\alpha_r}\}$  with $\alpha_1 < \alpha_2 < \ldots < \alpha_r$  in  $\Omega$ is a regular sequence in this order. 
\end{itemize}
\end{definition}

We shall prove that these two regularity conditions above are equivalent for homogeneous sequences. 
One implication holds for any sequence and it is easily proved as in the following lemma.

\begin{lemma}\label{0}
If an ordered set  $\{ f_{\alpha} \in R \ | \ \alpha \in \Omega \}$ satisfies the FR-condition, then it is a regular sequence.
\end{lemma}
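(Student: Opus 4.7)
The plan is to verify the two defining conditions of a regular sequence directly, in each case reducing to a finite subsequence where the FR-condition applies.

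First I would handle the nonvanishing condition $R/(f_\alpha \mid \alpha \in \Omega)R \neq 0$. Suppose for contradiction that this quotient is zero, so $1 \in (f_\alpha \mid \alpha \in \Omega)R$. Since ideal membership always involves a finite combination, there exist indices $\alpha_1 < \alpha_2 < \cdots < \alpha_r$ in $\Omega$ with $1 \in (f_{\alpha_1}, \ldots, f_{\alpha_r})R$. But the FR-condition asserts that $\{f_{\alpha_1}, \ldots, f_{\alpha_r}\}$ is a regular sequence, which by definition requires $R/(f_{\alpha_1}, \ldots, f_{\alpha_r})R \neq 0$, a contradiction.

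Next I would verify the non-zero-divisor condition: for each $\alpha \in \Omega$, if $x \in R$ satisfies $f_\alpha x \in (f_\beta \mid \beta < \alpha)R$, then $x \in (f_\beta \mid \beta < \alpha)R$. Again using that the element $f_\alpha x$ lies in an ideal generated by an arbitrary set but is itself a single element, we can write
$$
f_\alpha x = r_1 f_{\beta_1} + r_2 f_{\beta_2} + \cdots + r_s f_{\beta_s},
$$
for some $\beta_1 < \beta_2 < \cdots < \beta_s < \alpha$ in $\Omega$ and some $r_i \in R$. Now apply the FR-condition to the finite subsequence $\{f_{\beta_1}, f_{\beta_2}, \ldots, f_{\beta_s}, f_\alpha\}$, which is a regular sequence in this order. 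In particular $f_\alpha$ is a non-zero divisor modulo $(f_{\beta_1}, \ldots, f_{\beta_s})R$, so the displayed relation forces $x \in (f_{\beta_1}, \ldots, f_{\beta_s})R \subseteq (f_\beta \mid \beta < \alpha)R$, as required.

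There is no real obstacle here; the only point to watch is to remember that membership in an ideal generated by an infinite set is always witnessed by a finite subset, so each verification is instantly reducible to the finite case covered by the FR-condition. No transfinite induction is needed, since the condition to be checked at index $\alpha$ is itself a statement about a single element of $R$.
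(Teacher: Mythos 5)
Your proposal is correct and follows exactly the paper's own argument: both conditions for regularity are checked by noting that any ideal-membership relation involves only finitely many generators, and then invoking the FR-condition on the resulting finite subsequence (augmented by $f_\alpha$ in the non-zero-divisor step). Nothing further is needed.
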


\begin{proof}
If  $R = (f_{\alpha} \ | \ \alpha \in \Omega )R$, then  
we have the equality  $1 = \sum _{i=1}^r f_{\alpha _i}h_i$  for some  $f_{\alpha_1},\ldots ,f_{\alpha_r}$ and  $h_i \in R$. 
Thus $R =  (f_{\alpha_1}, \ldots ,f_{\alpha_r})R$ and this contradicts to the FR-condition. 
Therefore  we have  $R/(f_{\alpha} \ | \ \alpha \in \Omega )R \not= 0$.

Suppose that $hf_{\alpha} \in (f_{\beta} \ | \ \beta < \alpha)R$ for $h \in R$. 
Then we have the expression  $h f_{\alpha} = \sum_{i=1}^{r} f_{\beta_i} h_i$  for some  $\beta _1 < \cdots < \beta _r < \alpha$  and  $h_i \in R$.
Since $\{ f_{\beta_1}, \ldots ,f_{\beta_r} ,f_{\alpha} \}$ is a regular sequence in this order, we get 
$h \in (f_{\beta_1},\ldots , f_{\beta_r})R \subset (f_{\beta} \ | \ \beta < \alpha)R$.
\end{proof}

To prove the other implication we need several lemmas.

\begin{lemma}\label{1}
Let $\{ f_{\alpha} \in R  \ | \ \alpha \in \Omega \}$ be a regular sequence indexed by a well-ordered set  $\Omega$, and set  $I = ( f_{\alpha} \ | \ \alpha \in \Omega)R$. 
Let $\Bbb Y = \{ Y_{\alpha} \ | \ \alpha \in \Omega \}$ be a set of indeterminates over $R$  corresponding to $\Omega$. 
For a homogeneous polynomial $F \in R[\Bbb Y]_d$ of degree $d$  in  $\Bbb Y$, we denote by  $F(f)$  the elements of  $R$  obtained by substituting  $f_{\alpha}$  for  $Y_{\alpha}$. 
Under this notation, if $F(f) \in I^{d+1}$, then $F \in IR[\Bbb Y]$.
\end{lemma}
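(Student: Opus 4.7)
My plan is to induct on the degree $d$. The base case $d = 0$ is immediate: $F \in R$ is a constant and $F = F(f) \in I^{1} = I$, whence $F \in I R[\mathbb{Y}]$.

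For the inductive step ($d \geq 1$), only finitely many $Y_\alpha$'s appear in the polynomial $F$; let $\alpha_0$ be the largest such index, and decompose
\[
F = \sum_{k=0}^{d} Y_{\alpha_0}^{k}\, F_k, \qquad F_k \in R[Y_\beta : \beta < \alpha_0]_{d-k},
\]
so that $F_d \in R$ is a constant. The strategy is to show $F_d \in I$ first, then to subtract $Y_{\alpha_0}^{d}F_d$ and iterate on the lower $Y_{\alpha_0}$-powers; a secondary transfinite induction on $\alpha_0$ then handles the residual polynomial in $R[Y_\beta : \beta < \alpha_0]_d$.

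To extract $F_d$, set $J = (f_\beta : \beta < \alpha_0)R$ and $M = (f_\beta : \beta \neq \alpha_0)R$. For each $k < d$ the polynomial $F_k$ is homogeneous of positive degree $d-k$ in the $Y_\beta$ with $\beta < \alpha_0$, so $F_k(f) \in J^{d-k} \subseteq J$; consequently $F(f) \equiv f_{\alpha_0}^{d} F_d \pmod{J}$. Expanding $F(f) \in I^{d+1}$ via $I = M + (f_{\alpha_0})$ and collecting powers of $f_{\alpha_0}$ gives a representation
\[
f_{\alpha_0}^{d}\,F_d \;=\; \sum_{b = 0}^{d+1} f_{\alpha_0}^{b}\, m_b \;+\; j,
\]
with $m_b \in M^{d+1-b}$ and $j \in J \subseteq M$. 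Isolating the top two powers of $f_{\alpha_0}$ and noting that the remaining terms each lie in $M^{2} \subseteq M$ or in $J$, one arrives at $f_{\alpha_0}^{d}(F_d - f_{\alpha_0} m_{d+1} - m_d) \in M$. Provided $f_{\alpha_0}$ is a non-zero-divisor modulo $M$, cancellation yields $F_d - f_{\alpha_0} m_{d+1} - m_d \in M$; since $m_d \in M$, this forces $F_d \in (f_{\alpha_0}) + M = I$.

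The principal obstacle is exactly this non-zero-divisor property modulo $M$: the regular-sequence hypothesis provides it directly only modulo the smaller ideal $J$, and upgrading to modulo $M$ is essentially a permutation statement for the regular sequence (analogous to Theorem~\ref{5}). One closes this gap either by a syzygy analysis exploiting the exactness of the Koszul complex on $\{f_\alpha\}$, or by an auxiliary induction that simultaneously controls the upper generators $\{f_\gamma : \gamma > \alpha_0\}$. With $F_d \in I$ in hand, one iterates the peeling procedure to obtain $F_{d-1}, \ldots, F_1 \in I \cdot R[Y_\beta : \beta < \alpha_0]$; the transfinite induction on $\alpha_0$ combined with the outer induction on $d$ then completes the proof.
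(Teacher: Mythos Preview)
Your argument has a genuine gap at precisely the point you flag: you need $f_{\alpha_0}$ to be a non-zero-divisor on $R/M$ with $M = (f_\beta : \beta \neq \alpha_0)$, whereas the regular-sequence hypothesis supplies this only modulo the smaller ideal $J = (f_\beta : \beta < \alpha_0)$. The two remedies you propose are both circular within the paper's development. Theorem~\ref{5} (permutation of homogeneous regular sequences) is \emph{derived} from the present lemma via Corollary~\ref{2} and Proposition~\ref{4}, so it cannot be invoked here. Likewise, exactness of the Koszul complex on the full family amounts to knowing that every finite subsequence $f_{\alpha_1},\ldots,f_{\alpha_r}$ in increasing order is regular---which is exactly the FR-condition established in Proposition~\ref{4} as a \emph{consequence} of this lemma. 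Worse, the lemma is stated for an arbitrary ring $R$, not a graded one, and in that generality permutation of regular sequences is known to fail; so the non-zero-divisor property modulo $M$ is not available by any independent route, and your ``auxiliary induction controlling the upper generators'' would have to reproduce the content of Proposition~\ref{4} from scratch.

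The paper avoids the difficulty by never introducing indices above the top variable of $F$. It proves by transfinite induction on $\alpha$ the sharper localized statement: for $F \in R[\Bbb Y_\alpha]_d$ with $F(f) \in I_\alpha^{\,d+1}$ (where $I_\alpha = (f_\beta : \beta \leq \alpha)$), one has $F \in I_\alpha R[\Bbb Y_\alpha]$. Because both the polynomial and the target ideal live entirely at indices $\leq \alpha$, the only non-zero-divisor fact needed is that $f_\alpha$ is regular on $R/J_\alpha^{\,j}$ for all $j$ (with $J_\alpha = (f_\beta : \beta < \alpha)$), and this follows from the transfinite induction hypothesis together with the given regular-sequence property in its natural order. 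The paper also first reduces to $F(f)=0$ and peels a single factor via $F = G + Y_\alpha H$ rather than a full power expansion, which keeps the induction on $d$ clean. The original lemma then follows from the localized version by choosing $\alpha$ large enough to contain both the variables in $F$ and the finitely many $f_\gamma$ occurring in a fixed expression of $F(f)$ as an element of $I^{d+1}$.
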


\begin{proof}
Using a transfinite induction on $\alpha \in \Omega$ we shall prove a more strong statement: 

\vspace{6pt}\noindent
{\it Claim 1: 
 Let  $I_{\alpha}  = ( f_{\beta} \ | \ \beta \leq \alpha)R$ and $\Bbb Y _{\alpha} = \{ Y_{\beta} \ |\  \beta \leq \alpha \}$. 
For a homogeneous polynomial  $F \in R[\Bbb Y_{\alpha}]_d$, if $F(f) \in I_{\alpha}^{d+1}$, then $F \in I_{\alpha} R[\Bbb Y_{\alpha}]$.
}\vspace{6pt}

\noindent
As the transfinite induction hypothesis, we assume that Claim 1 holds for any  $\alpha ' \in \Omega$  with  $\alpha ' < \alpha$. 
To prove Claim 1 we need the following auxiliary result.

\vspace{6pt}\noindent
{\it Claim 2: 
 Let  $J _{\alpha}= ( f_{\beta} \ | \ \beta < \alpha)R$ and $\Bbb Y_{\alpha}' = \{ Y_{\beta} \ |\  \beta < \alpha \}$. 
Then  $f_{\alpha}$ is a non-zero divisor on $R/J_{\alpha}^{j}$ for all $j \geq 1$.
}\vspace{6pt}

In fact, suppose that $hf_{\alpha} \in J_{\alpha}^{j}$ for some $j > 1$. 
Arguing by the induction on  $j$  we have $h \in J_{\alpha}^{j-1}$, hence $h = H(f)$  for some  $H \in R[\Bbb Y_{\alpha}']_{j-1}$. 
Thus  $f_{\alpha} H(f) =  f_{\alpha} h \in J_{\alpha}^{(j-1)+1}$, and 
applying the transfinite induction hypothesis to $f_{\alpha} H \in R[\Bbb Y_{\alpha}']_{j-1}$, we have $f_{\alpha} H \in J_{\alpha}R[\Bbb Y_{\alpha}']$. 
Since $f_{\alpha}$ is a non-zero divisor on $R/J_{\alpha}$, we have $H \in J_{\alpha}R[\Bbb Y_{\alpha}']_{j-1}$, therefore $h = H(f) \in J_{\alpha}^{j}$.

Now we proceed to the proof of Claim 1. 
For this, let  $F \in R[\Bbb Y_{\alpha}]_d$. 
We shall prove by the induction on  $d$  that  $F (f) \in I_{\alpha}^{d+1}$ 
implies   $F \in I_{\alpha}R[\Bbb Y_{\alpha}]$. 
If $d = 0$ then the claim is trivial, hence we assume that $d > 0$. 

First we show that we may assume that $F(f) = 0$. 
Since $F(f) \in I_{\alpha}^{d+1}$, there exists  $G \in R[\Bbb Y_{\alpha}]_{d+1}$ such that $F(f) = G(f)$. 
Writing  $G = \sum_{i=1}^{n} Y_{\beta _{i}} G_{i}$ with  $\beta _i \leq \alpha$  and  $G_{i} \in R[\Bbb Y_{\alpha}]_{d}$, 
we set $G' = \sum_{i=1}^{n} f_{\beta _{i}} G_{i}$. 
Then we have $F - G' \in R[\Bbb Y_{\alpha}]_{d}$ and $(F - G')(f) = 0$. 
Furthermore, it holds that  $F - G' \in I_{\alpha}R[\Bbb Y_{\alpha}]$ if and only if $F \in I_{\alpha}R[\Bbb Y_{\alpha}]$. 

Henceforth we assume $F(f) = 0$. 
Then we may write $F = G + Y_{\alpha} H$ with $G \in R[\Bbb Y_{\alpha}']_d$  and  $H \in R[\Bbb Y_{\alpha}]_{d-1}$. 
Since  $f_{\alpha} H(f) = -G(f) \in J _{\alpha}^d$, the Claim 2 above implies that $H(f) \in J_{\alpha}^{d} \subset I_{\alpha}^{(d-1)+1}$. 
Thus, by the induction on $d$, we get $H \in I_{\alpha}R[\Bbb Y_{\alpha}]$. 
On the other hand, since  $H(f) \in  J_{\alpha}^{d}$, there is 
 $H' \in R[\Bbb Y_{\alpha}']_{d}$  such that $H(f) = H'(f)$ holds. 
As
$
(G + f_{\alpha}H')(f) = F(f) = 0,
$
it follows by transfinite induction on $\alpha$ that $G + f_{\alpha}H' \in J_{\alpha}R[\Bbb Y_{\alpha}'] \subset I_{\alpha}R[\Bbb Y_{\alpha}]$. 
Since $f_{\alpha} H' \in I_{\alpha}R[\Bbb Y_{\alpha}]$, we get $G \in I_{\alpha}R[\Bbb Y_{\alpha}]$. 
Therefore $F \in I_{\alpha}R[\Bbb Y_{\alpha}]$.
This completes the proof. 
\end{proof}

\begin{corollary}\label{2}
Let $\{ f_{\alpha} \in R  \ | \ \alpha \in \Omega \}$ be a regular sequence indexed by a well-ordered set  $\Omega$, and set  $I = ( f_{\alpha} \ | \ \alpha \in \Omega)R$. 
And let $\Bbb Y = \{ Y_{\alpha} \ | \ \alpha \in \Omega \}$ be a set of indeterminates over $R$  corresponding to $\Omega$ as in the lemma. 
Then the map $\varphi :  (R/I)[\Bbb Y] \to gr_{I}(R) = \bigoplus_{n=0}^{\infty} I^{n}/I^{n+1}$ induced by the substitution $Y_{\alpha} \mapsto \bar f_{\alpha} \in I/I^{2}$ is an isomorphism as algebras over  $R/I$. 
In particular, $I^{n}/I^{n+1}$ is an $(R/I)$-$free$ $module$ for all $n \in \Bbb N$. 
\end{corollary}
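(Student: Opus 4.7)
The plan is to treat this corollary as a direct packaging of Lemma \ref{1}. First I would construct $\varphi$ as a graded $R/I$-algebra map. Since $I$ is generated by the $f_\alpha$, the classes $\bar f_\alpha$ generate $I/I^2$ as an $R/I$-module, and $gr_I(R)$ is generated as an $R/I$-algebra by its degree-one component (because $I^n$ is spanned by $n$-fold products of elements of $I$, so the multiplication maps $(I/I^2)^{\otimes n} \twoheadrightarrow I^n/I^{n+1}$ are surjective). The universal property of the polynomial algebra $(R/I)[\Bbb Y]$ then yields a unique graded $R/I$-algebra homomorphism $\varphi$ sending $Y_\alpha$ to $\bar f_\alpha$, and the generation argument just given shows $\varphi$ is surjective in every degree.

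For injectivity I would argue degree by degree. Fix $d \geq 0$ and suppose $\bar F \in (R/I)[\Bbb Y]_d$ lies in $\ker \varphi$. Lifting to $F \in R[\Bbb Y]_d$, the vanishing $\varphi(\bar F) = 0$ is precisely the statement $F(f) \in I^{d+1}$. This is the exact hypothesis of Lemma \ref{1}, which then gives $F \in I \cdot R[\Bbb Y]$, i.e.\ $\bar F = 0$ in $(R/I)[\Bbb Y]_d$. Hence $\varphi$ is injective in each degree and so globally.

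Finally, since each homogeneous component $(R/I)[\Bbb Y]_n$ is a free $R/I$-module on the degree-$n$ monomials in the $Y_\alpha$, transporting this basis across the isomorphism $\varphi$ exhibits $I^n/I^{n+1}$ as a free $R/I$-module for every $n$. The substantive work has already been done in Lemma \ref{1}; the only mild subtlety remaining is verifying that $gr_I(R)$ is generated as an $R/I$-algebra in degree one, which is what lets us invoke the universal property of the polynomial algebra in the first place.
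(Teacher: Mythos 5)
Your proposal is correct and follows essentially the same route as the paper: the paper's proof simply notes that $\varphi$ is a well-defined surjective $R/I$-algebra map by construction and deduces injectivity from Lemma \ref{1}, which is exactly your degree-by-degree argument with the details of surjectivity and the lifting step written out. Nothing is missing.
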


\begin{proof}
From the definition,  $\varphi$  is a well-defined algebra map over  $R/I$  that is surjective. 
It follows from Lemma \ref{1} that  $\varphi$  is injective as well.
\end{proof}

\begin{lemma}\label{3}
Let  $R = \bigoplus_{n=0}^{\infty} R_{n}$ be a non-negatively graded ring, and let  $\{ f_{\alpha}\ |\  \alpha \in \Omega \}$ be a sequence of homogeneous elements of positive degree in  $R$  indexed by a well-ordered set  $\Omega$. 
If $\{ f_{\alpha} \ | \ \alpha \in \Omega\}$ satisfies the FR-condition, then so does any permutation. 
More precisely, if $\Omega '$  is another well-ordered set such that there is a  bijective mapping  $\sigma : \Omega '  \to \Omega$, then the sequence 
 $\{ f_{\sigma ( \alpha' ) }\ |\  \alpha ' \in \Omega ' \}$  satisfies the FR-condition whenever  
 $\{ f_{\alpha}\ |\  \alpha \in \Omega \}$ does. 
\end{lemma}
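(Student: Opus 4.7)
The plan is to reduce the FR-condition for the permuted sequence to a finite regular sequence, then to adjacent transpositions, and finally to a two-element case. First, observe that any finite subsequence of $\{f_{\sigma(\alpha')}\}_{\alpha' \in \Omega'}$ extracted in $\Omega'$-order is just a finite subset of the original $\{f_\alpha\}$ listed in some (possibly non-$\Omega$) order. Since the original FR-condition guarantees that after re-sorting by $\Omega$ this finite subset is a regular sequence in that order, it suffices to prove the following statement: any permutation of a finite regular sequence of homogeneous, positive-degree elements is again a regular sequence.

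Next, since every permutation of $r$ elements factors as a product of adjacent transpositions, it suffices to show that swapping two consecutive entries $f_i, f_{i+1}$ in a regular sequence $f_1, \ldots, f_r$ preserves regularity in the new order. Passing to the non-negatively graded quotient $\bar R = R/(f_1, \ldots, f_{i-1})R$, in which the images of $f_i, \ldots, f_r$ remain non-zero homogeneous elements of positive degree (non-zero because each is still part of a regular sequence modulo the preceding terms), and noting that the terms after $f_{i+1}$ play an inert role since the ideal $(f_i,\ldots,f_j)\bar R$ is independent of the swap, the problem reduces to the two-element statement: if $\{f, g\}$ is a regular sequence on a non-negatively graded ring $\bar R$ with $f, g$ homogeneous of positive degree, then $\{g, f\}$ is also a regular sequence on $\bar R$.

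For the two-element case, the condition $\bar R/(f,g)\bar R \neq 0$ is symmetric in $f$ and $g$, and checking that $f$ is a non-zero divisor modulo $g\bar R$ is a short chase: if $fh = gh'$ then $gh' \in f\bar R$, which forces $h' \in f\bar R$ by the non-zero-divisor property of $g$ modulo $f$, say $h' = fh''$; cancelling $f$ (which is a non-zero divisor on $\bar R$) yields $h = gh''$. The \emph{main obstacle} is showing that $g$ itself is a non-zero divisor on $\bar R$, and this is the step where both homogeneity and positive degree are essential. I would argue as follows: suppose $gh = 0$ for some non-zero homogeneous $h$. Then $gh \in f\bar R$, so $h = fh_1$ with $\deg h_1 = \deg h - \deg f < \deg h$; cancelling $f$ gives $gh_1 = 0$, and iterating this descent forces $h \in \bigcap_{n \geq 0} f^n \bar R$. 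Since $\deg f > 0$ and $\bar R$ is non-negatively graded, no non-zero homogeneous element can lie in this intersection, contradicting $h \neq 0$.

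Together these steps settle the two-element case, and propagating the conclusion back through the adjacent-transposition and finite-subsequence reductions proves the lemma.
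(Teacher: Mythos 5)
Your proposal is correct and follows essentially the same route as the paper: reduce the FR-condition to permutations of finite regular sequences, then to the swap of two adjacent homogeneous elements, and handle the two-element case by the degree-descent argument showing $h\in (f^n)R$ for all $n$ (impossible for $h\neq 0$ since $\deg f>0$ and the grading is non-negative), followed by the same short chase for $f$ modulo $(g)$. The only difference is that you spell out the adjacent-transposition factorization and the passage to the quotient $\bar R$ explicitly, which the paper leaves implicit.
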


\begin{proof}
By the definition of the FR-condition it is enough to show that 
any permutation of a finite homogeneous regular sequence $f_1,f_2,\ldots,f_r$ is again a regular sequence. 
For this we have only to show that if $f,g$ is a homogeneous regular sequence,  then so is $g,f$. 
Suppose that  $f, g$  is a homogeneous regular sequence on  $R$. 
Assume $hg = 0$ for $h \in R$ and we want to show that  $h=0$. 
We may assume that $h$ is homogeneous. 
Since $hg \in (f)R$, we see $h \in (f)R$. 
Writing  $h = h_{1}f$  for some  $h_1 \in R$, we have $h_{1}g = 0$ since $f$ is a non-zero divisor on $R$. 
Hence  $h_1 \in (f)R$, and $h \in (f^2)R$. 
Subsequently we have  $h \in (f^n)R$ for any  $n \geq 1$.
Take $n$ so that  $n \deg f > \deg h$ and we conclude that  $h=0$. 

Next assume $h'f \in (g)R$ for $h' \in R$. 
Writing  $h'f = gh'_{1}$, we have $h'_{1} = f h'_{2}$ for some  $h_2 \in R$,  since  $gh'_{1} \in (f)R$ and  $g$  is a non-zero divisor on  $R/(f)R$. 
Since  $f$  is a non-zero divisor on  $R$, we have  $h' = gh'_2 \in (g)R$ as desired. 
\end{proof}

\begin{proposition}\label{4}
Let  $R = \bigoplus_{n=0}^{\infty} R_{n}$ be a non-negatively graded ring, and let  $\{ f_{\alpha}\ |\  \alpha \in \Omega \}$ be a sequence of homogeneous elements of positive degree in  $R$  indexed by a well-ordered set  $\Omega$. 
If $\{ f_{\alpha} \ | \ \alpha \in \Omega\}$ is a regular sequence on $R$, then   it satisfies the FR-condition. 
\end{proposition}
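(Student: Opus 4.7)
The plan is to prove, by transfinite induction on $\alpha \in \Omega$, the statement $P(\alpha)$: for every finite subset $\Omega_0 \subseteq \{\gamma \in \Omega : \gamma \leq \alpha\}$, the sequence $\{f_\gamma : \gamma \in \Omega_0\}$ in the order induced from $\Omega$ is a regular sequence on $R$. Since every finite subset of $\Omega$ has a maximum, establishing $P(\alpha)$ for all $\alpha$ immediately yields the FR-condition.

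For the inductive step, after disposing of the case $\alpha \notin \Omega_0$ by applying $P(\max \Omega_0)$, I reduce to $\Omega_0 = \{\alpha_1 < \cdots < \alpha_{r-1} < \alpha_r = \alpha\}$. The induction hypothesis makes $f_{\alpha_1}, \ldots, f_{\alpha_{r-1}}$ a regular sequence, so with $K = (f_{\alpha_1}, \ldots, f_{\alpha_{r-1}})R$ and $J = (f_\beta : \beta < \alpha)R \supseteq K$, the entire problem reduces to showing that $f_\alpha$ is a non-zerodivisor on $R/K$; the nonvanishing of the quotient modulo $K + (f_\alpha)R$ is inherited from $R/I \neq 0$, and the original regularity directly gives that $f_\alpha$ is a non-zerodivisor on $R/J$.

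The central step, and the main obstacle, is to exhibit inside $R/K$ the regular sequence $\{\bar f_\beta : \beta \in T'\}$, where $T' = \{\beta < \alpha\} \setminus \{\alpha_1, \ldots, \alpha_{r-1}\}$ carries the induced order. For each $\beta \in T'$, the finite set $\Omega_\beta := \{\alpha_1, \ldots, \alpha_{r-1}\} \cup \{\gamma \in T' : \gamma < \beta\} \cup \{\beta\}$ has maximum strictly less than $\alpha$, so by the transfinite hypothesis $P(\max \Omega_\beta)$ and Lemma \ref{3} (finite permutations of finite regular sequences remain regular), one may reorder the corresponding regular sequence to place $\{\alpha_1,\ldots,\alpha_{r-1}\}\cup\{\gamma \in T' : \gamma<\beta\}$ first and $\beta$ last, which witnesses that $\bar f_\beta$ is a non-zerodivisor on the required quotient. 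This is the delicate point: once one has this auxiliary regular sequence on $R/K$, everything else is structural.

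Finally, Corollary \ref{2} applied on $R/K$ to this regular sequence yields an isomorphism $\mathrm{gr}_{J/K}(R/K) \cong (R/J)[Y_\beta : \beta \in T']$, so each $(J/K)^n/(J/K)^{n+1}$ is a free $(R/J)$-module. Since $f_\alpha$ is a non-zerodivisor on $R/J$, it acts as a non-zerodivisor on each of these free modules. The positive-degree homogeneity of the generators of $J/K$ forces $\bigcap_n (J/K)^n = 0$, and a standard filtration-plus-degree argument upgrades the graded non-zerodivisor property to show that $f_\alpha$ is a non-zerodivisor on $J/K$. Feeding this into the short exact sequence $0 \to J/K \to R/K \to R/J \to 0$ together with the non-zerodivisor property on $R/J$ completes the inductive step, and hence the proof.
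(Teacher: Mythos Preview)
Your overall strategy mirrors the paper's: reduce to showing that $f_\alpha$ is a non-zerodivisor on $R/K$, exhibit the intermediate images $\bar f_\beta$ (for $\beta<\alpha$ not among the $\alpha_i$) as a regular sequence on $R/K$, apply Corollary \ref{2}, and finish with a filtration/degree argument. The paper packages this as a minimal-counterexample argument and additionally reorders $\Omega$ so that $\alpha_1,\dots,\alpha_{r-1}$ become the first $r-1$ elements; this makes your $T'$ an honest interval $(\alpha_{r-1},\alpha_r)$, but otherwise the logic is the same.

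There is, however, a genuine gap in your central step. You define
\[
\Omega_\beta := \{\alpha_1,\dots,\alpha_{r-1}\}\cup\{\gamma\in T':\gamma<\beta\}\cup\{\beta\}
\]
and call it ``the finite set'', then invoke $P(\max\Omega_\beta)$ and Lemma \ref{3}. But $\{\gamma\in T':\gamma<\beta\}$ is in general \emph{not} finite: if $\alpha$ has infinitely many predecessors in $\Omega$ (say $\Omega=\omega+1$ and $\alpha=\omega$), then for large enough $\beta\in T'$ the set $\Omega_\beta$ is infinite, and neither the induction hypothesis $P(\cdot)$ nor Lemma \ref{3} applies to it.

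The repair is easy and is essentially what the paper does. To verify that $\bar f_\beta$ is a non-zerodivisor on $(R/K)/(\bar f_\gamma:\gamma\in T',\ \gamma<\beta)$, take $h\in R$ with $hf_\beta\in K+(f_\gamma:\gamma\in T',\ \gamma<\beta)R$. Any such relation involves only finitely many generators, so there is a finite $F\subset\{\gamma\in T':\gamma<\beta\}$ with $hf_\beta\in K+(f_\gamma:\gamma\in F)R$. Now $\{\alpha_1,\dots,\alpha_{r-1}\}\cup F\cup\{\beta\}$ is genuinely finite with maximum strictly below $\alpha$, so the induction hypothesis together with Lemma \ref{3} shows that $f_\beta$ is a non-zerodivisor modulo $K+(f_\gamma:\gamma\in F)R$, whence $h$ lies in that ideal and a fortiori in $K+(f_\gamma:\gamma\in T',\ \gamma<\beta)R$. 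Equivalently, you can prove the FR-condition for $\{\bar f_\beta:\beta\in T'\}$ on $R/K$ --- which only ever involves finite subsets --- and then cite Lemma \ref{0} to obtain the regular sequence. With this correction the remainder of your argument goes through unchanged.
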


\begin{proof}
Suppose there is a finite sequence $\{ f_{\alpha_{1}}, f_{\alpha_{2}}, \ldots, f_{\alpha_{r}} \}$ with  $\alpha_{1} < \alpha_{2} < \ldots < \alpha_{r}$
such that it is not a regular sequence.
Take such a $\{ f_{\alpha_{1}}, f_{\alpha_{2}}, \ldots, f_{\alpha_{r}} \}$ with $\alpha_{r}$ being minimum in $\Omega$. 
Note by this choice of  $\alpha _r$ that $\{ f_{\beta}\ | \ \beta < \alpha_{r} \}$ satisfies the FR-condition. 
After changing the order of first $(r-1)$ elements in the sequence we may assume the following:

\begin{itemize}
\item[(i)] $\alpha_{1},\ldots,\alpha_{r-1}$ are the first $(r-1)$ elements in $\Omega$, 
\item[(ii)] $\{ f_{\beta} \ |\ \beta < \alpha_{r} \}$ satisfies the FR-condition, 
\item[(iii)] $\{ f_{\beta} \ |\  \beta < \alpha_{r} \} \sqcup \{ f_{\alpha_{r}} \}$ is a regular sequence, 
\item[(iv)] $\{ f_{\alpha_{1}}, f_{\alpha_{2}}, \ldots, f_{\alpha_{r}} \}$ is not a regular sequence. 
\end{itemize}

Considering the residue ring  $\bar R = R/(f_{\alpha_{1}}, \ldots , f_{\alpha_{r-1}})R$, we have: 

\begin{itemize}
\item[(i)']  $\{ \bar f_{\beta} \in \bar R \ |\  \alpha_{r-1} < \beta < \alpha_{r} \}$ satisfies the FR-condition, 
\item[(ii)']  $\{ \bar f_{\beta} \in \bar R \ | \ \alpha_{r-1} < \beta < \alpha_{r} \} \sqcup \{ \bar f_{\alpha_{r}} \}$ is a regular sequence on $\bar R$, 
\item[(iii)']  $\bar f_{\alpha_{r}}$ is a zero divisor on $\bar R$. 
\end{itemize}

We show a contradiction from this setting. 
Set $J = (\bar f_{\beta} \ | \ \alpha_{r-1} < \beta < \alpha_{r}) \bar R$. 
By (iii)'  there is a non-zero homogeneous element  $\bar g \in \bar R$
such that  $\bar g \bar f_{\alpha_{r}} = 0$. 
Then we can find  $n \in \Bbb N$ such that $\bar g \in J^{n} \setminus J^{n+1}$. 
Since $\bar f_{\alpha_{r}}$ is a non-zero divisor on $\bar R/J$ by (ii)', and since $\bar f_{\alpha_{r}} \bar g \equiv 0$ in $J^{n}/J^{n+1}$, we apply Corollary \ref{2} using  (i)' and we conclude that $\bar g \in J^{n+1}$. 
This is a contradiction. 
\end{proof}

As a consequence of \ref{0}, \ref{3} and \ref{4} we have the following result.

\begin{theorem}\label{5}
Let  $R = \bigoplus_{n=0}^{\infty} R_{n}$ be a non-negatively graded ring, and let  $\SS = \{ f_{\alpha}\ |\  \alpha \in \Omega \}$ be a sequence of homogeneous elements of positive degree in  $R$  indexed by a well-ordered set  $\Omega$. 
Then $\SS$ is a regular sequence on $R$ if and only if 
$\SS$  satisfies the FR-condition.  
In particular,  any permutation of a homogeneous regular sequence is again a regular sequence. 
\end{theorem}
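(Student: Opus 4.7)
The plan is to assemble the theorem directly from the three results already established: Lemma \ref{0}, Lemma \ref{3}, and Proposition \ref{4}. No new argument is needed beyond bookkeeping, so the proof should be a short three-line verification rather than a fresh construction.

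For the equivalence, I would begin with the easy direction. If $\SS$ satisfies the FR-condition, then Lemma \ref{0} applies verbatim (it requires neither the graded structure nor positivity of degrees) and yields that $\SS$ is a regular sequence on $R$. Conversely, if $\SS$ is a regular sequence, the hypothesis that $R$ is non-negatively graded and each $f_\alpha$ is homogeneous of positive degree lets me invoke Proposition \ref{4} to conclude that $\SS$ satisfies the FR-condition. Thus the two conditions are equivalent under the stated hypotheses.

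For the \emph{in particular} clause, let $\sigma : \Omega' \to \Omega$ be any bijection between well-ordered sets and consider the permuted sequence $\{f_{\sigma(\alpha')} \mid \alpha' \in \Omega'\}$. Assuming $\SS$ is regular, the equivalence just proved gives that $\SS$ satisfies the FR-condition. Lemma \ref{3} then tells me that the permuted sequence also satisfies the FR-condition, and Lemma \ref{0} turns this back into the statement that the permuted sequence is a regular sequence on $R$.

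The main obstacle, if any, was already overcome in Proposition \ref{4}, whose proof in turn depended on Corollary \ref{2} (the freeness of $I^n/I^{n+1}$ over $R/I$) and hence on the full strength of Lemma \ref{1}. Lemma \ref{3} is where the homogeneous positive-degree hypothesis is genuinely used, because its swap argument concludes $h=0$ from $h\in\bigcap_{n\ge 1}(f^n)R$ via a degree comparison; without positive degree this fails in general. Given those ingredients in hand, Theorem \ref{5} is a purely formal consequence, and I expect the written proof to consist essentially of the three citations above chained in the order \ref{4} $\Rightarrow$ \ref{3} $\Rightarrow$ \ref{0}.
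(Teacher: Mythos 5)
Your proposal is correct and matches the paper exactly: the paper gives no separate argument for Theorem \ref{5}, stating only that it is "a consequence of \ref{0}, \ref{3} and \ref{4}," and your chaining of Lemma \ref{0} (FR implies regular), Proposition \ref{4} (regular implies FR for homogeneous positive-degree elements), and Lemma \ref{3} plus Lemma \ref{0} for the permutation claim is precisely the intended assembly.
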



Now we return to the case for the polynomial ring  $S=k[x_1, x_2, \ldots ]$.

\begin{lemma}\label{regular}
\begin{itemize}
\item[$(1)$]
Let   $\SS$   be a set of monomials in  $S$. 
Then  $\SS$  is a homogeneous regular sequence on  $S$  if and only if 
any two monomials in  $\SS$  are coprime, 
i.e.  any distinct elements  $\mu_1$  and  $\mu _2$  in  $\SS$  have no common divisor except units. 

\item[$(2)$]
Let  $\SS$  be a homogeneous regular sequence on $S$ and let  $n$  be an integer. 
\begin{itemize}
\item[$(a)$]
Then  $\SS \cap \Sn$  is a finite set consisting of at most  $n$ elements. 
\item[$(b)$]
The set  $\{ f \in \SS \ | \ \deg (f) \leq n \}$  is a finite set. 
\item[$(c)$]
$\SS$  is a countable set. 
\end{itemize}
\end{itemize}
\end{lemma}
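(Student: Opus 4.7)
The plan for (1) rests on using Theorem \ref{5} to freely switch between the regular-sequence property and the FR-condition. For the forward direction, I assume two distinct elements $\mu, \mu' \in \SS$ share a non-unit common factor $d$, and write $\mu = d\nu$, $\mu' = d\nu'$; since $\mu \neq \mu'$, at least one of $\nu, \nu'$ is a non-unit, say $\nu$. After reordering so that $\mu, \mu'$ are the first two elements of the sequence (permitted by Theorem \ref{5}), the identity $\nu \mu' = \nu' \mu \in (\mu)$ combined with $\nu \notin (\mu)$ (since $\nu$ is a proper divisor of $\mu$) contradicts $\mu'$ being a non-zero-divisor on $S/(\mu)$. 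For the reverse direction, I verify the FR-condition: for any finite subsequence $\mu_1, \ldots, \mu_r$ of pairwise coprime monomials, the quotient $S/(\mu_1, \ldots, \mu_{i-1})$ has a $k$-basis consisting of monomials divisible by none of the $\mu_j$ ($j<i$); and if $x^\alpha \mu_i$ is divisible by some $\mu_j$ ($j<i$), coprimality of $\mu_j$ and $\mu_i$ forces $\mu_j \mid x^\alpha$, so multiplication by $\mu_i$ is injective on this quotient.

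For (2)(a), the strategy is to reduce to the known bound for regular sequences in a polynomial ring with finitely many variables. Any finite subsequence of $\SS \cap \Sn$ is a finite subsequence of $\SS$, hence regular on $S$ by Theorem \ref{5}; so $\SS \cap \Sn$ itself satisfies the FR-condition and is a regular sequence on $S$. Next, I use that $S$ is a free, hence faithfully flat, $\Sn$-module (with basis the monomials in $x_{n+1}, x_{n+2}, \ldots$) to descend regularity from $S$ to $\Sn$. A homogeneous regular sequence of positive-degree elements in $\Sn$ has length at most $n$ by the classical length-bound (dimension $n$, Cohen--Macaulay), giving $|\SS \cap \Sn| \leq n$.

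For (2)(b), the standing degree assumption ensures only finitely many variables $x_i$ satisfy $\deg x_i \leq n$; letting $N$ be the largest such index, every homogeneous polynomial of degree $\leq n$ lies in $S^{\langle N\rangle}$, so $\{f \in \SS : \deg f \leq n\} \subseteq \SS \cap S^{\langle N\rangle}$, which is finite by (a). Part (2)(c) is then immediate: $\SS = \bigcup_{n \in \N} \{f \in \SS : \deg f \leq n\}$ is a countable union of finite sets.

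The main obstacle I anticipate is the second step of (2)(a): carefully descending regularity from $S$ to $\Sn$ via faithful flatness and then invoking the length bound for regular sequences in finite-variable polynomial rings. The other parts are matter-of-fact applications of Theorem \ref{5} combined with the degree bookkeeping that the paper has already set up.
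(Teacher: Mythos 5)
Your proposal is correct and follows essentially the same route as the paper: for (2)(a) the paper likewise passes from $\SS$ to $\SS \cap \Sn$ (implicitly via the FR-condition of Theorem \ref{5}), descends regularity along the faithfully flat extension $\Sn \subset S$, and invokes the depth bound $\mathrm{depth}\,\Sn = n$; parts (2)(b) and (2)(c) use the identical degree bookkeeping. The only difference is that the paper leaves part (1) to the reader, whereas you supply a correct argument (zero-divisor from a common factor in one direction, the monomial-basis/coprimality argument for the FR-condition in the other).
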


\begin{proof}
It is easy to prove (1) and we leave it the reader. 
To prove (2)(a), note that  $\SS \cap \Sn$  is a homogeneous regular sequence on $\Sn$, since  $\Sn \subset S$  is a faithfully flat ring extension. 
The graded $k$-algebra  $\Sn$  has depth $n$, hence any homogeneous regular sequence on  $\Sn$  has at most length  $n$. 
It forces $| \SS \cap \Sn | \leqq n$. 
For  (2)(b), recall that the grading for  $S$  is given in such a way that  there are only a finite number of monomials of degree  $n$  for each  integer  $n$.  Therefore,  for any integer  $n > 0$,  there is an integer  $m$   such that the equality  $S_n = (\Sm) _n$  holds. 
Therefore the set  $\{ f \in \SS \ | \ \deg (f) \leq n\}$  is a subset of  $\SS \cap \Sm$ that is a finite set.
Since the equality  $\SS = \bigcup _{n=0}^{\infty} \{ f \in \SS \ | \ \deg (f) \leq n\}$ holds,  $\SS$  is a countable set. 
\end{proof}

\begin{proposition}
Let  $\SS$  be a homogeneous regular sequence on $S$, and let  $I$  be the homogeneous ideal generated by  $\SS$. 
Then the Hilbert series of  the graded ring  $S/ I$  is given by
$$
H_{S/I} (T) = H_S(T) \cdot \prod _{f \in \SS} (1-T^{\deg (f)}).  
$$
\end{proposition}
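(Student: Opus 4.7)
The plan is to verify equality of the two formal power series coefficient by coefficient: for each fixed $N \in \N$, both the $T^N$-coefficient of $H_{S/I}(T)$ and the $T^N$-coefficient of $H_S(T)\cdot\prod_{f\in\SS}(1-T^{\deg f})$ are in fact determined by only finitely many elements of $\SS$, which lets me reduce to the classical finite-regular-sequence identity.

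First I would fix $N$ and, using Lemma \ref{regular}(2)(b), extract the finite set $\{f_1,\dots,f_r\}$ of all $f\in\SS$ with $\deg f\le N$. Set $I'=(f_1,\dots,f_r)S$. Since each $\deg x_i\ge 1$ (forced by the finiteness of the graded pieces $S_d$), any homogeneous product $gf$ with $f\in\SS$ and $\deg(gf)\le N$ has $\deg f\le N$. Consequently $I_N=I'_N$, and so $(S/I)_N=(S/I')_N$.

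Next I would compute $H_{S/I'}(T)$. By Theorem \ref{5} any reordering of $f_1,\dots,f_r$ remains a regular sequence on $S$. Picking $m$ large enough that $f_1,\dots,f_r\in\Sm$, the faithful flatness of $\Sm\hookrightarrow S$ used in the proof of Lemma \ref{regular}(2)(a), combined with Theorem \ref{5} inside $\Sm$, shows that $f_1,\dots,f_r$ is a homogeneous regular sequence on the finite-variable polynomial ring $\Sm$. The classical Koszul-complex computation then gives
\begin{equation*}
H_{\Sm/(f_1,\dots,f_r)\Sm}(T) \;=\; H_{\Sm}(T)\prod_{i=1}^{r}(1-T^{\deg f_i}).
\end{equation*}
Since $S = \Sm\otimes_k k[x_{m+1},x_{m+2},\dots]$ and $I' = (f_1,\dots,f_r)\Sm\cdot S$, flat base change by $k[x_{m+1},x_{m+2},\dots]$ together with $H_S(T)=H_{\Sm}(T)\cdot H_{k[x_{m+1},\dots]}(T)$ yields
\begin{equation*}
H_{S/I'}(T) \;=\; H_S(T)\prod_{i=1}^{r}(1-T^{\deg f_i}).
\end{equation*}

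Finally I would compare with the full product. By Lemma \ref{regular}(2)(b), for each degree $d$ only finitely many $f\in\SS$ satisfy $\deg f\le d$, so $\prod_{f\in\SS}(1-T^{\deg f})$ is a well-defined element of $\Z[[T]]$. Splitting it as $\prod_{i=1}^{r}(1-T^{\deg f_i})\cdot\prod_{f\in\SS,\,\deg f>N}(1-T^{\deg f})$, the second factor is congruent to $1$ modulo $T^{N+1}$, so multiplying by it does not alter the $T^N$-coefficient. Therefore
\begin{equation*}
\Kdim_k(S/I)_N \;=\; \Kdim_k(S/I')_N \;=\; \bigl[T^N\bigr]\!\left(H_S(T)\prod_{f\in\SS}(1-T^{\deg f})\right),
\end{equation*}
and since $N$ is arbitrary the two power series agree. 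The only mildly delicate point is the truncation in the first step, which leans on positivity of the grading and on the finiteness in Lemma \ref{regular}(2)(b); everything else is a standard reduction to the finite-variable case, where the regular-sequence Hilbert-series identity is classical.
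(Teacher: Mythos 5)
Your proof is correct and follows essentially the same route as the paper: fix a degree, use Lemma \ref{regular}(2)(b) to cut $\SS$ down to the finitely many elements of degree at most that bound, pass to a finite-variable subring $\Sm$ where the classical regular-sequence Hilbert-series identity applies, and observe that the tail of the product is congruent to $1$ modulo high powers of $T$. The only cosmetic difference is that you compute $H_{S/I'}(T)$ exactly by flat base change from $\Sm$, whereas the paper just compares truncations $(S/I)_{\leq n} = (\Sm/(f_1,\dots,f_r)\Sm)_{\leq n}$ modulo $T^n\Z[[T]]$; both are sound.
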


\begin{proof}
For a graded $S$-module $M = \bigoplus _{i=0}^{\infty} M_i$,  we denote by  $M_{\leq n}$  the $k$-subspace of $M$  spanned by homogeneous elements of degree at most $n$ ; 
$
M_{\leq n} = \bigoplus _{i \leq n} M_i.  
$
Now let  $\{ f_1 , \ldots, f_r\}$  be the set of all the elements of $\SS$  of degree at most $n$ which is a finite set by Lemma \ref{regular} (2). 
Then it is easy to see that  $I _{\leq n} = ((f_1, \ldots , f_r )S)_{\leq n}$. 
Thus  $(S/I) _{\leq n} = ( S /(f_1, \ldots , f_r )S ) _{\leq n}$.   
From the definition of graded structure of  $S$, given an integer $n$, we have an integer $m$  with  the equality    $S_{\leq n} = ( \Sm ) _{\leq n}$.  
We can take such an integer $m$ as $\Sm$  contains  $f_1, \ldots , f_r$. 
Therefore we have $(S/I) _{\leq n} = ( \Sm /(f_1, \ldots , f_r )\Sm ) _{\leq n}$. 
This implies that the difference  
$H_{S/I}(T) - H_{\Sm /(f_1, \ldots , f_r )\Sm} (T)$  belongs to  $T^n \Z [[T]]$. 
Note that the equality 
$$
H_{\Sm /(f_1, \ldots , f_r )\Sm} (T) = H_{\Sm}(T) \cdot \prod _{i=1} ^r (1-T^{\deg (f_i)})
$$ 
is known to hold by \cite[Exercise 19.14]{E}, since  $\{ f_1 , \ldots, f_r\}$ is a regular sequence on  $\Sm$.
Thus we have 
$$
H_{S/I}(T) - H_{S}(T) \cdot \prod _{f \in \SS, \deg (f) \leq n} (1-T^{\deg (f)})  \in T^n \Z [[T]].
$$
Since this holds for any integer $n$, the proof is completed. 
\end{proof}

The criterion of Bayer-Stillman \cite[Proposition 15.15]{E}  is generalized in the following form.

\begin{proposition}\label{bayer}
Let  $\SS = \{ f_{\alpha} \ | \ \alpha \in \Omega \}$  be a set of elements in  $S$. 
Assume that  $\{ \lm (f_{\alpha}) \ | \ \alpha \in \Omega \}$  is a regular sequence on $S$. 
Then  $\SS$  is a regular sequence on $S$  and it is a Gr\"obner base for the ideal  $\SS S$. 
\end{proposition}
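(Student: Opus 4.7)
The plan is to establish the Gr\"obner base assertion via Buchberger's criterion, and then derive the regular sequence assertion by reducing to the classical finite-variable case.

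For the Gr\"obner base assertion, I would apply Proposition \ref{buchberger}. The hypothesis that $\{\lm (f_{\alpha}) \ | \ \alpha \in \Omega \}$ is a regular sequence of monomials forces, via Lemma \ref{regular}(1), that the leading monomials are pairwise coprime. Hence for distinct $\alpha, \mu \in \Omega$, $\LCM  \{\lm (f_{\alpha}),\lm (f_{\mu})\} = \lm (f_{\alpha}) \lm (f_{\mu})$. Writing $f_{\alpha} = \lt(f_{\alpha}) + r_{\alpha}$ and $f_{\mu} = \lt(f_{\mu}) + r_{\mu}$ with $\lm (r_{\alpha}) < \lm (f_{\alpha})$ and $\lm (r_{\mu}) < \lm (f_{\mu})$, a direct calculation gives
$$
S(f_{\alpha}, f_{\mu}) = \frac{1}{\lc (f_{\alpha}) \lc (f_{\mu})} \bigl( r_{\alpha} f_{\mu} - r_{\mu} f_{\alpha} \bigr),
$$
and the standard cancellation argument, identical in form to the one sketched for Claims 1 and 2 in the proof of Proposition \ref{buchberger}, shows that $0$ is a remainder of $S(f_{\alpha}, f_{\mu})$ with respect to $\{f_{\alpha}, f_{\mu}\} \subseteq \SS$. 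By Buchberger's criterion, $\SS$ is a Gr\"obner base for the ideal $\SS S$.

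For the regular sequence assertion, I would invoke Theorem \ref{5} by verifying the FR-condition on $\SS$. Given any finite subsequence $f_{\alpha_{1}}, \ldots, f_{\alpha_{r}}$ of $\SS$ with $\alpha_{1} < \cdots < \alpha_{r}$, choose an integer $n$ so that each $f_{\alpha_{i}} \in \Sn$. Their leading monomials are pairwise coprime monomials in $\Sn$, hence form a regular sequence in the Noetherian ring $\Sn$ by the finite-variable version of Lemma \ref{regular}(1). The classical Bayer--Stillman criterion \cite[Proposition 15.15]{E}, applied inside $\Sn$, then yields that $f_{\alpha_{1}}, \ldots, f_{\alpha_{r}}$ is a regular sequence in $\Sn$. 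Since $\Sn \hookrightarrow S$ is faithfully flat, this remains a regular sequence in $S$. Hence $\SS$ satisfies the FR-condition, and Theorem \ref{5} concludes that $\SS$ is a regular sequence in $S$.

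The principal obstacle is that $S$ is not Noetherian and $\SS$ may be countably infinite, so the classical Bayer--Stillman criterion cannot be invoked directly on $\SS$ itself. The resolution is the uniform approximation of each finite subsequence of $\SS$ by its image in the finite-variable subring $\Sn$, together with faithful flatness of $\Sn \hookrightarrow S$ and the bridge from the FR-condition back to a genuine regular sequence supplied by Theorem \ref{5}. A minor technical point to verify is that the S-polynomial cancellation in the Gr\"obner base step is purely formal, depending only on coprimality of the leading monomials, so it transfers without change from the finite-variable setting.
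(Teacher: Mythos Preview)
Your Gr\"obner base argument is essentially the same as the paper's: both observe that coprimality of the leading monomials (Lemma \ref{regular}(1)) gives the explicit syzygy $S(f,g) = -(g-\lt(g))f + (f-\lt(f))g$, from which Buchberger's criterion finishes the job.

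For the regular sequence assertion, however, you take a genuinely different route. The paper argues directly inside $S$: assuming $hf_{\alpha}\in (f_{\beta}\mid\beta<\alpha)S$, it uses the Gr\"obner base property of $\{f_{\beta}\mid\beta<\alpha\}$ (already established) to see that some $\lm(f_{\beta_1})$ divides $\lm(h)\lm(f_{\alpha})$, and coprimality forces $\lm(f_{\beta_1})\mid\lm(h)$; an induction on $\lm(h)$ then gives $h\in(f_{\beta}\mid\beta<\alpha)S$. Your approach instead verifies the FR-condition by pushing each finite subsequence into some $\Sn$, applying the classical Bayer--Stillman criterion there, and lifting back via the flatness of $\Sn\hookrightarrow S$. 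The paper's argument is self-contained and does not leave $S$; yours is more modular, trading a bespoke induction for the finite-variable theorem plus the FR machinery developed earlier in the section.

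One small correction: you should cite Lemma \ref{0} rather than Theorem \ref{5} at the end. Theorem \ref{5} is stated only for \emph{homogeneous} sequences of positive degree, whereas Proposition \ref{bayer} carries no homogeneity hypothesis on the $f_{\alpha}$. The implication you actually need, FR-condition $\Rightarrow$ regular sequence, is precisely Lemma \ref{0}, which holds for arbitrary sequences. With that adjustment your argument is complete.
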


\begin{proof}
To prove that  $\SS$  is a Gr\"obner base, we have only to show that $0$  is a remainder of  $S (f, g)$ with respect to  $\SS$  for any $f, g \in \SS$. 
See Proposition \ref{buchberger}. 
More strongly we can show that $0$  is a remainder of 
$S(f, g)$  with respect to  $\{ f, g\}$,  whenever  $\lm (f), \lm (g)$  is a regular sequence on  $S$. 
In fact,  $S(f, g) = \lt (g)f -  \lt (f)g = -(g - \lt (g))f + (f - \lt (f))g$  holds and it is easy to see that this description shows that the remainder is $0$. 

Now to prove that  $\SS$  is a regular sequence on  $S$, 
we assume that $h f_{\alpha} \in (f_{\beta} \ | \ \beta < \alpha)S$  for  $h \in S$  and  $\alpha \in \Omega$. 
We shall show that  $h \in (f_{\beta} \ | \ \beta < \alpha)S$  by the induction  on  $\lm (h)$. 
Then we have  $\lm (h) \lm (f_{\alpha}) \in in ((f_{\beta} | \beta < \alpha)S)$. 
Since  $\{ \lm (f_{\beta}) \ | \ \beta < \alpha \}$ is a regular sequence on $S$, it follows from the first half of the proof that  $\{ f_{\beta} \ | \ \beta < \alpha \}$ is a Gr\"obner base for the ideal $( f_{\beta} \ | \ \beta < \alpha)S$. 
Thus there is a monomial  $\lm (f_{\beta_1})$ with  $\beta _1 < \alpha$  which divides  $\lm (h)$. 
Therefore  $\lm (h - c_1 \mu_1 f_{\beta_1}) < \lm (h)$ holds for some $c_1 \in k$  and  a monomial  $\mu_1$. 
Then it follows from the induction hypothesis that  $h - c_1 \mu _1 f_{\beta_1} \in (f_{\beta} \ | \ \beta < \alpha)S$, hence 
$h \in  (f_{\beta} \ | \ \beta < \alpha)S$. 
\end{proof}

\section{Applications}

Let $S = k[x_1, x_2, \ldots]$ be a polynomial ring with countably infinite variables as before.
We regard $S$ as a graded $k$-algebra by defining $\deg (x_i) = i$ for each $i \in \N$, and  denote by  $S_n$  the part of degree $n$  of  $S$ for  $n \in \N$.
Note that there is a bijective mapping between the set of partitions of $n$ and the set of monomials of degree $n$. 
The correspondence is given by mapping a partition $\lambda = (\lambda _1, \lambda _2, \ldots , \lambda _r) \vdash n$  to the monomial $x^{\lambda} = x_{\lambda _r} \cdots x_{\lambda _2}x_{\lambda _1}$  of degree $n$.

\vspace{6pt}

Let  $W$  be any subset of $\N$ satisfying  $pW \subset W$ for an integer $p \ge 2$,  where  $pW = \{ pw \ | \ w \in W\}$. 
In this case, we consider a subring  $R = k[X_i \ | \ i \in W]$  of $S$.
We are interested in the following two subsets of partitions of $n$: 
$$
\begin{array}{rcl}
X(n) &= \{ \lambda \vdash n &|\ \ \lambda _i \in W \setminus pW \}, \vspace{4pt}\\
Y(n) &= \{ \lambda \vdash n &|\ \ \lambda _i \in W , \ \text{and  any  number  appears among  the $\lambda _i$'s} \\
& &\ \ \ \text{at most $p-1$ times} \}. 
\end{array}
$$

\begin{theorem}\label{main theorem}
Under the circumstances above, consider the set of homogeneous polynomials $\G = \{ x_{i}^{p} - x_{pi} \ | \ i \in W \}$  in   $R$. 
We adopt the homogeneous anti-reverse lexicographic order (resp. the homogeneous lexicographic order)  on the set of monomials in  $R$. 
Then $\G$  is a reduced Gr\"obner base (resp. a Gr\"obner base)  for the ideal $\G R$. 

Furthermore, define a mapping  $\varphi : X(n) \to Y(n)$  so that   
$x^{\varphi(\lambda)}$  is the remainder of  $x^{\lambda}$ with respect to  $\G$ in the homogeneous anti-reverse lexicographic order for any  $\lambda \in X(n)$. 
Then $\varphi$ is a well-defined bijective mapping. 
\end{theorem}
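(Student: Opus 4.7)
The plan is to identify the leading monomial of each $f_i := x_i^p - x_{pi}$ under both monomial orders, verify the Gr\"obner-base property via the Bayer--Stillman-type criterion (Proposition \ref{bayer}), and then deduce the bijectivity of $\varphi$ by invoking the Macaulay-type Proposition \ref{Macaulay} \emph{twice}, once for each ordering.

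The monomials $x_i^p$ and $x_{pi}$ have the same degree $ip$ and their exponent vectors differ only at positions $i$ and $pi$; since $p \geq 2$, the ``last'' differing position is $pi$, where $x_i^p$ has exponent $0$ and $x_{pi}$ has exponent $1$. Reading Example \ref{order example}, this gives $\lm(f_i) = x_i^p$ in the anti-reverse lexicographic order and $\lm(f_i) = x_{pi}$ in the homogeneous lexicographic order. In both cases the set of leading monomials is pairwise coprime (disjoint pure $p$-th powers in the first case, disjoint single variables in the second), so by Lemma \ref{regular}(1) it is a homogeneous regular sequence on $R$, and Proposition \ref{bayer} yields that $\G$ is a Gr\"obner base for $\G R$. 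Reducedness in $>_{harl}$ is immediate by inspection: for $i \neq j$ neither $x_j^p$ nor $x_{pj}$ is divisible by $x_i^p$ (the latter because $x_{pj}$ carries exponent $1$ while $p \geq 2$).

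To see $\varphi$ is well-defined, observe that each reduction step of the division algorithm in $>_{harl}$ applied to a monomial replaces a factor $x_i^p$ by $x_{pi}$; this preserves being a monomial and keeps every variable index inside $W$ (since $i \in W$ forces $pi \in pW \subseteq W$). By Proposition \ref{division}, the algorithm terminates at a uniquely determined monomial remainder $x^{\varphi(\lambda)}$ irreducible by $\G$, which means every variable appears fewer than $p$ times; hence $\varphi(\lambda) \in Y(n)$.

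The main obstacle is bijectivity, which I handle by two parallel Macaulay computations. Applying Proposition \ref{Macaulay} in the anti-reverse lexicographic order gives
\[
\dim_k (R/\G R)_n \;=\; \dim_k \bigl(R/(x_i^p \mid i \in W)\bigr)_n \;=\; |Y(n)|,
\]
while applying it in the homogeneous lexicographic order gives
\[
\dim_k (R/\G R)_n \;=\; \dim_k \bigl(R/(x_{pi} \mid i \in W)\bigr)_n \;=\; |X(n)|,
\]
the claimed monomial bases being clear by inspection of each monomial quotient. Hence $|X(n)| = |Y(n)|$, both finite. For injectivity, if $\varphi(\lambda_1) = \varphi(\lambda_2)$ then $x^{\lambda_1} - x^{\lambda_2} \in \G R$; applying the $>_{hl}$-version of $\overline{\varphi}$ from Proposition \ref{Macaulay}, and noting that each $x^{\lambda_i}$ is its own $>_{hl}$-remainder (no index from $pW$ occurs in $\lambda_i$), we obtain $x^{\lambda_1} - x^{\lambda_2} \in (x_{pi} \mid i \in W)$. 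Since this monomial ideal contains no monomial supported on $W \setminus pW$, the difference must vanish and $\lambda_1 = \lambda_2$. Injectivity plus equality of finite cardinalities gives the bijection; the key conceptual point is that $X(n)$ and $Y(n)$ both arise as sets of standard monomials for the \emph{same} ideal $\G R$, under two different monomial orders.
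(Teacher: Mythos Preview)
Your argument is correct. The first half---identifying the leading monomials under each order, invoking Lemma~\ref{regular}(1) and Proposition~\ref{bayer}, and checking reducedness by inspection---is exactly what the paper does. For bijectivity, however, you take a genuinely different route. The paper constructs an explicit inverse $\psi : Y(n) \to X(n)$ by running the $>_{hl}$ division algorithm (replacing $x_{pi}$ by $x_i^p$) and then asserts that $\varphi\circ\psi$ and $\psi\circ\varphi$ are identities ``by construction.'' You instead invoke Proposition~\ref{Macaulay} in both monomial orders to obtain $|X(n)| = \dim_k (R/\G R)_n = |Y(n)|$, and then prove injectivity separately via the $>_{hl}$ standard-monomial basis. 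Your approach is more structural and makes explicit the principle (hinted at right after Proposition~\ref{Macaulay}) that $X(n)$ and $Y(n)$ are two sets of standard monomials for the \emph{same} ideal under two orders; the paper's approach is more algorithmic and delivers the inverse bijection as a concrete procedure, which aligns with the combinatorial goal of actually exhibiting the one-to-one correspondences. Note incidentally that your injectivity argument already contains the paper's: the fact that each $x^{\lambda}$ with $\lambda \in X(n)$ is its own $>_{hl}$-remainder is precisely the statement $\psi\circ\varphi = \mathrm{id}_{X(n)}$, so the two proofs are closer than they first appear.
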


\begin{proof}
Note that  $x_i^p >_{harl} x_{pi}$ (resp. $x_{pi} >_{hl} x_i^p$), hence we have  $\lm (x_{i}^{p}-x_{pi}) = x_{i}^{p}$ in homogeneous anti-reverse lexicographic order  (resp. $\lm (x_{i}^{p}- x_{pi}) = x_{pi}$ in homogeneous lexicographic order ) for all $i \in W$. 
Since it is clear that  $\{  x_i ^p \ | \ i \in W\}$  (resp. $\{ x_{pi} \ | \ i \in W\}$)  is a homogeneous regular sequence on  $R$, it follows from Proposition \ref{bayer} that 
 $\{ x_{i}^{p} - x_{pi} \ | \ i \in W \}$ is a homogeneous regular sequence on $R$, which is a Gr\"obner base. 
Actually this is a reduced Gr\"obner base in the case of  homogeneous anti-reverse lexicographic order.

To prove the second half of the theorem, 
let  $\lambda \in X(n)$ be an arbitrary element. 
By definition  $x^{\lambda}$ contains no variables  $x_{pi} \ (i \in W)$.
In order to get the remainder of  $x^{\lambda}$  with respect to  $\G$  in the homogeneous anti-reverse lexicographic order, 
we replace  $x_i^p$  with  $x_{pi}$  in the monomial, 
 whenever  $x^{\lambda}$  involves a $p$th power  $x_i^p$  of a variable. 
 Continue this procedure until we get the monomial $x^{\rho}$  involving no $p$th power of a variable.   
It is then clear that  $\rho \in Y(n)$  and  $x^{\rho}$  is the remainder of  $x^{\lambda}$  with respect to  $\G$  in the homogeneous anti-reverse lexicographic order. 
In  such a way we have  $\varphi (\lambda) = \rho$, hence the mapping  $\varphi : X(n) \to Y(n)$ is well-defined. 

In a similar manner to this, we can define  $\psi : Y(n) \to X(n)$ by using the homogeneous lexicographic order and by replacing  $x_{pi}$  with  $x_i^p$  in the monomials, and it is obvious by the construction that 
$\varphi \cdot \psi =id _{Y(n)}$  and  $\psi \cdot \varphi = id _{X(n)}$. 
\end{proof}

Just considering the generating functions of $|X(n)|$ and $|Y(n)|$, 
we see that the following equality holds;
$$
\prod_{m \in W \setminus pW} \frac{1}{1-t^m} = \prod_{m \in W} (1 + t^m + t^{2m} + \cdots + t^{(p-1)m}).
$$

\begin{example}
Recall that  $A(n)$, $B(n)$ and $C(n)$  are the sets of partitions given in Introduction.
\begin{itemize}
\item[$(1)$] 
If  $W = \{ n \in \N \ |\ n \equiv \pm 1 \pmod 3 \}$ and $p=2$, then  $X(n) = A(n)$ and $Y(n) = B(n)$. 
\item[$(2)$]
If  $W = \{ n \in \N \ |\ n \equiv 1 \pmod 2 \}$ and $p=3$,  then $X(n) = A(n)$ and $Y(n) = C(n)$. 
\end{itemize}
\end{example}

As a consequence of all the above, we obtain one-to-one correspondences among $A(n)$, $B(n)$ and $C(n)$ by using the theory of Gr\"obner bases. 
Considering their generating functions we have the following equalities: 
$$
\prod_{m \equiv \pm 1 \pmod 6} \frac{1}{1-t^m} 
= \prod_{m \equiv \pm 1 \pmod 3} (1 + t^m), 
= \prod_{m \equiv 1 \pmod 2} (1 + t^m + t^{2m})
$$
which are called the Schur's equalities.
See \cite[(1.2) and (1.3)]{A}.

\vspace{6pt}

We close the paper by raising a problem. 
For this let us consider the following sets of partitions. 
$$
\begin{array}{rcl}
P(n) &= \ \{ \ \lambda \vdash n  &| \ \ \lambda _i \equiv \pm 1 \pmod 5 \ \}, \vspace{4pt} \\
Q(n) &= \ \{ \ \lambda \vdash n  &| \ \ \lambda _i - \lambda _{i+1} \geq 2 \ \}. \end{array}
$$
By Rogers-Ramanujan equality  
$$
\prod_{m \equiv \pm1 \pmod 5} \frac{1}{1-t^m} = 1 + \sum_{m=1}^{\infty} \frac{t^{m^2}}{(1-t)(1-t^2) \cdots (1-t^m)},
$$
it is known that the sets  $P(n)$  and  $Q(n)$  have the same cardinality for each $n \in \N$. 
(See \cite[(5.26)]{B}.)

If we find an ideal $I$  as in the following problem, then we will obtain a one-to-one correspondence between   $P(n)$  and  $Q(n)$  by using division algorithm.

\begin{problem}
Find an ideal $I$ of  $S$  and a monomial order on  $\Mon (S)$  satisfying  $S/I \cong k[\{x_i \ | \ i \equiv \pm1 \pmod 5 \} ]$  and  $in (I) = (x_i^2 , x_ix_{i+1} \ |\  i \in \N)$.
\end{problem}


\vspace{12pt}

\begin{center}
Kei-ichiro Iima; Graduate School of Natural Science and Technology, Okayama University, Okayama 700-8530, Japan
\texttt{(e-mail:iima@math.okayama-u.ac.jp)} \\
Yuji Yoshino; Department of Math., Okayama University, Okayama 700-8530, Japan 
\texttt{(e-mail:yoshino@math.okayama-u.ac.jp)}
\end{center}

\end{document}